\newtheorem{theorem}{Theorem}[section]
\newtheorem{corollary}[theorem]{Corollary}
\newtheorem{lemma}[theorem]{Lemma}
\newtheorem{proposition}[theorem]{Proposition}
\newtheorem{problem}[theorem]{Problem}
\newtheorem{mainthm}[theorem]{Main Theorem}
\theoremstyle{definition}
\newtheorem{definition}[theorem]{Definition}
\newtheorem{remark}[theorem]{Remark}
\newtheorem{example}[theorem]{Example}
\numberwithin{equation}{section}
\DeclareMathOperator{\fix}{Fix}
\DeclareMathOperator{\per}{Per}
\DeclareMathOperator{\rec}{Rec}
\DeclareMathOperator{\fs}{FS}
\DeclareMathOperator{\cl}{cl}
\renewcommand{\int}{\mathop{\mathrm{int}}}
\begin{document}


\baselineskip=17pt


\title[Equicontinuous mappings on trees]{Equicontinuous mappings on finite trees}

\author[G. Acosta]{Gerardo Acosta}
\address{Instituto de Matem\'aticas\\
Universidad Nacional Aut\'onoma de M\'exico\\
\'Area de la Investigaci\'on Cient\'{\i}fica, Circuito Exterior, Ciudad Universitaria\\ Coyoac\'an, 04510, CDMX, Mexico.
}
\email{gacosta@matem.unam.mx}

\author[D. Fern\'andez]{David Fern\'andez-Bret\'on}
\address{Instituto de Matem\'aticas\\
Universidad Nacional Aut\'onoma de M\'exico\\
\'Area de la Investigaci\'on Cient\'{\i}fica, Circuito Exterior, Ciudad Universitaria\\
Coyoac\'an, 04510, CDMX, Mexico.
}
\email{djfernandez@im.unam.mx}
\urladdr{https://homepage.univie.ac.at/david.fernandez-breton/}

\date{}

\begin{abstract}
If $X$ is a finite tree and $f \colon X \longrightarrow X$ is a map, as the Main Theorem of this
paper (Theorem~\ref{mainthm}), we find eight conditions, each of which is equivalent to the fact that $f$ is equicontinuous. To name just a few of the results obtained: the equicontinuity of $f$ is equivalent to the fact that 
there is no arc $A \subseteq X$ satisfying $A  \subsetneq f^n[A]$ for some $n\in \mathbb{N}$. It is also equivalent 
to the fact that for some nonprincial ultrafilter $u$, the function $f^u \colon X \longrightarrow X$ is continuous (in 
other words, failure of equicontinuity of $f$ is equivalent to the failure of continuity of {\it every} element of the Ellis remainder $g\in E(X,f)^*$). One of the tools used in the proofs is the Ramsey-theoretic result known as Hindman's theorem. Our results generalize the ones shown by Vidal-Escobar and Garc\'ia-Ferreira 
in~\cite{ivon-salvador}, and complement those of Bruckner and Ceder~(\cite{bruckner-ceder}), Mai~(\cite{mai}) and Camargo, Rinc\'on and Uzc\'ategui~(\cite{camargo-rincon-uzcategui}).
\end{abstract}

\subjclass[2010]{Primary 37B40, 37E25, 54D80, 54F50; Secondary 54A20, 54D05.}

\keywords{Dendrites, Discrete Dynamical Systems, Ellis Semigroup, Equicontinuous Functions, Finite Graphs, Finite Trees, Ramsey Theory.}

\maketitle

\section{Introduction}

For a metric space $X,$ this paper deals with maps $f \colon X \longrightarrow X,$ whose family of iterates is equicontinuous. Such functions represent well-behaved, non-chaotic, dynamical systems (equicontinuity 
is diametrically opposite to what is known as sensitivity to initial conditions, see \cite[Theorem~2.4]{Akin}). 

\begin{definition}\label{def:dynamicalsystem}\hfill
\begin{enumerate}
\item A {\bf discrete dynamical system} is a pair $(X,f)$ such that $X$ is a metric space and 
         $f \colon X\longrightarrow X$ is a {\bf map}, i.e. a continuous function.
\item If $(X,f)$ is a discrete dynamical system, we define $f^0$ as the identity map on $X$, and, for each 
        $n \in \mathbb N$, $f^n = f^{n-1} \circ f$.
\item If $X,Y$ are metric spaces and $\mathcal F$ is a family of functions from $X$ to $Y$, we say that 
         $\mathcal F$ is {\bf equicontinuous at} $x \in X$ if for every $\varepsilon>0$ there exists a $\delta>0$ such 
         that $d(x,y) < \delta$ implies $d(f(x),f(y))\leq\varepsilon$ for all $y\in X$ and every $f\in\mathcal F;$ and if 
         $\mathcal F$ is equicontinuous at every $x\in X$, we say that
         $\mathcal F$ is {\bf equicontinuous}.
\item If $X$ is a metric space, the function $f \colon X\longrightarrow X$ is  {\bf equicontinuous at $x\in X$} if its 
         family of iterates, $\{f^n\big|n\in\mathbb N\}$, is equicontinuous at $x$; and if $f$ is equicontinuous at every 
         $x\in X$ we say that it is {\bf equicontinuous}.
\end{enumerate}
\end{definition}

The definition of equicontinuity makes sense for every uniform space, but in this paper we will only consider metric spaces. Note that, upon fixing $x,\varepsilon,\delta$, equicontinuity of a family of functions $\mathcal F$ is a pointwise closed condition; consequently if $\mathcal F$ is equicontinuous at $x$ then so is $\overline{\mathcal F}$, where $\overline{\mathcal F}$ is the closure of $\mathcal F$ in $Y^X$ with the product topology. Note also that, if $X$ is compact, then by the usual argument, equicontinuity implies uniform equicontinuity (i.e., given 
$\varepsilon>0$, a $\delta>0$ can be chosen to work for all $x\in X$).

\begin{definition}\label{def:ellis}
Let $(X,f)$ be a discrete dynamical system, where $X$ is compact.
\begin{enumerate}
\item The {\bf Ellis semigroup} (also called the {\bf enveloping semigroup}) of $(X,f)$ is defined as 
         $E(X,f)=\overline{\{f^n\big|n\in\mathbb N\}}$, the closure in $X^X$ (with the product topology) of the family 
         $\{f^n\big|n\in\mathbb N\}$. Note that, as $X^X$ is compact (by Tychonoff's theorem), so is $E(X,f)$.
\item The {\bf Ellis remainder} of the discrete dynamical system $(X,f)$ is 
         $$
         E(X,f)^*=  \bigcap_{n=1}^\infty\overline{\{f^k \big| k\geq n\}}.
         $$ 
         Note that $E(X,f) = E(X,f)^* \cup \{f^n \big|n \in \mathbb N\}$.
\end{enumerate}
\end{definition}

Composition of functions is what makes $E(X,f)$ a {\em semigroup}. In fact, $E(X,f)$ is a compact right-topological semigroup (a semigroup equipped with a topology making all right translations 
continuous). Since $X$ is a metric space, by the observation immediately after Definition~\ref{def:dynamicalsystem}, equicontinuity of $f$ is equivalent to equicontinuity of the family $E(X,f)$, and either of these is equivalent to the same statement with uniform equicontinuity instead of equicontinuity (cf.~\cite[Theorem~3.3]{garcia-sanchis}).

The seemingly abstract object $E(X,f)$ can be made more concrete by means of ultrafilters: for every ultrafilter $u$ on $\mathbb N$, define the ultrafilter-limit function $f^u$ by letting $f^u(x)=u\text{-}\lim_{n\in\mathbb N}f^n(x)$. Then by \cite[Theorem~2.2]{garcia-sanchis} we have 
$$
E(X,f)=\{f^u\big|u\text{ is an ultrafilter on }\mathbb N\},
$$
and consequently
$$
E(X,f)^*=\{f^u\big|u\text{ is a nonprincipal ultrafilter on }\mathbb N\}.
$$
Full definitions of ultrafilters, both principal and nonprincipal, as well as of $u$-limits will be provided in Section~\ref{sect:ultrafilters}. 

\begin{definition}\label{def:fix}
Let $(X,f)$ be a discrete dynamical system.
\begin{enumerate}
\item $x\in X$ is a {\bf fixed point} if $f(x)=x$; the set of fixed points of $f$ is denoted by $\fix(f).$
\item $x\in X$ is a {\bf periodic point} if $f^n(x)=x$, for some $n\in\mathbb N$, in which case the least such 
        $n$ is called the {\bf period} of $x$; the set of all periodic points of $f$ is denoted by $\per(f)$.
\item $f$ is {\bf periodic} if there exists $n \in \mathbb{N}$ such that $f^n$ is the identity map on $X$,  
         and $f$ is {\bf pointwise-periodic} if $\per(f) = X.$
\end{enumerate}
\end{definition}

It is immediate from Definition~\ref{def:fix} that $\per(f)=\bigcup_{n=1}^\infty\fix(f^n)$.

We primarily deal with {\em continua} (compact, connected and metric spaces). A {\em simple closed curve} is a continuum homeomorphic to the unit circle $\mathbb S^1,$ and an {\em arc} is a continuum homeomorphic to the
unit interval $[0,1].$ Other examples of continua are {\em finite graphs} (compact, connected one-dimensional polyhedra),  {\em dendrites}, {\em finite trees} and {\em $k$-ods}, for each $k \in \mathbb N$ with $k \geq 2.$ We give the proper definitions of the last three in Section~2. For the moment it is convenient to note that a $2$-od is an arc,
$k$-ods are finite trees and finite trees are dendrites.

In the early nineties, Bruckner and Hu (\cite{bruckner-hu}) and Bruckner and Ceder (\cite{bruckner-ceder}) carried 
out a very deep and complete study of equicontinuity of maps defined on arcs, obtaining the following result.

\begin{theorem}[Subset of~{\cite[Theorem 1.2]{bruckner-ceder}}]\label{thm:bruck-ced}
If $X$ is an arc and $f \colon X\longrightarrow X$ is a map, then the following are equivalent:
\begin{enumerate}
\item[\emph{(1)}] $f$ is equicontinuous;
\item[\emph{(2)}]  the restriction $f^2\upharpoonright\bigcap_{m=1}^\infty f^m[X]$ is the identity map;
\item[\emph{(3)}]  $\fix(f^2)=\bigcap_{m=1}^\infty f^m[X];$
\item[\emph{(4)}]  $\fix(f^2)$ is connected.
\end{enumerate}
\end{theorem}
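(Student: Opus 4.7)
The plan is to prove the cycle $(1) \Rightarrow (2) \Rightarrow (3) \Rightarrow (4) \Rightarrow (1)$. Identify $X$ with $[0,1]$ and set $C = \bigcap_{m=1}^\infty f^m[X]$. Since each $f^m[X]$ is a subarc and the sequence decreases, $C$ is itself a subarc (possibly degenerate), and a short compactness argument gives $f[C] = C$. Moreover $\fix(f^2) \subseteq C$ always, because $y = f^2(y)$ implies $y \in f^{2k}[X]$ for every $k$. With this setup, $(2) \Rightarrow (3)$ is immediate since $(2)$ furnishes the reverse inclusion $C \subseteq \fix(f^2)$, and $(3) \Rightarrow (4)$ is automatic because $C$, being a nested intersection of subarcs, is connected.

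For $(1) \Rightarrow (2)$ I would exploit the equicontinuity of $f \upharpoonright C$ on the compact arc $C$ together with the standard fact that for an equicontinuous system the Ellis semigroup is a compact topological group. In the group $E(C, f \upharpoonright C)$, the identity element must equal $\mathrm{id}_C$, because it fixes the image of $f \upharpoonright C$, which is all of $C$; consequently $f \upharpoonright C$ admits a two-sided inverse in this group, so it is a homeomorphism of $C$ and hence strictly monotone. If it is order-preserving then every orbit in $C$ is monotone; but equicontinuity forces every point to be uniformly recurrent, so each such orbit is constant and $f \upharpoonright C = \mathrm{id}_C$. If it is order-reversing, apply the same argument to $f^2 \upharpoonright C$, obtaining $(2)$.

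The main obstacle is $(4) \Rightarrow (1)$. Write $\fix(f^2) = [a,b]$ (with $a = b$ allowed). Since $f$ carries $\fix(f^2)$ into itself, $f \upharpoonright [a,b]$ is a continuous involution of an arc and therefore equicontinuous. Outside $[a,b]$ the continuous function $f^2 - \mathrm{id}$ has no zeros, so its sign is constant on each component of $[0,a) \cup (b,1]$; an intermediate-value analysis, together with the fact that $f$ permutes $\{a,b\}$, shows that for each $x \notin [a,b]$ the even iterates $f^{2n}(x)$ form a monotone sequence eventually reaching and remaining in $[a,b]$. The step I expect to be genuinely hard is upgrading this pointwise attraction into equicontinuity: one needs a uniform rate, in the sense that for every $\varepsilon > 0$ a single $N$ makes $f^n(x)$ lie within $\varepsilon$ of $[a,b]$ for all $n \geq N$ and all $x \in X$. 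This should follow from a Dini-type argument applied on $[0, a-\delta]$ and $[b+\delta, 1]$ using the already-established monotonicity, combined with the equicontinuity of $f$ on $[a,b]$ to control the orbit after it enters the core $C$.
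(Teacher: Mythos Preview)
The paper does not give its own proof of this theorem; it is quoted from Bruckner--Ceder as background, and the paper's machinery for finite trees (Lemma~\ref{arc-iff-disconnection} together with Theorem~\ref{non-equi-implies-f-expanding}) actually uses the arc case as an input via Proposition~\ref{simple-f-expanding}. So there is no in-paper argument to compare against; I will simply assess yours.

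Your $(1)\Rightarrow(2)$ is sound: once you know $f\upharpoonright C$ is surjective and equicontinuous, the Ellis-semigroup reasoning (or, more directly, the results the paper quotes in Proposition~\ref{nuevo1}) gives that $f\upharpoonright C$ is a homeomorphism of the arc $C$, and monotonicity plus recurrence forces $f^2\upharpoonright C=\mathrm{id}_C$. The steps $(2)\Rightarrow(3)\Rightarrow(4)$ are routine.

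The genuine gap is in $(4)\Rightarrow(1)$, and it comes earlier than the Dini step you flag. Your claim that the even iterates $f^{2n}(x)$ form a monotone sequence is false in general. Take $X=[0,1]$ and the piecewise-linear map $f$ with $f(0)=0.4$, $f(\tfrac12)=\tfrac12$, $f(1)=0.1$; one checks that $\fix(f^2)=\{\tfrac12\}$ (so $(4)$ holds with $a=b=\tfrac12$), yet the even orbit of $1$ begins $1,\ 0.42,\ 0.4968,\ldots$, jumping from the right of $\tfrac12$ to the left and then climbing back up. The sign of $f^2-\mathrm{id}$ on each component tells you which direction a single step moves, but not on which side of $[a,b]$ the orbit lands, so the intermediate-value analysis you invoke does not deliver monotonicity. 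Your second assertion, that the orbit eventually reaches and remains in $[a,b]$, is also false already for $f(x)=x/2$ with $[a,b]=\{0\}$. Without monotonicity Dini's theorem is unavailable, and you are left with exactly the hard problem you anticipated---turning pointwise attraction to $[a,b]$ into a uniform statement---but now without the tool you planned to use. Bruckner and Ceder's original argument for this implication is considerably more intricate than your sketch suggests.
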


Attempting to generalize this result from arcs to finite trees is futile, if taken too literally. Allowing, however, exponents other than 2 in the theorem above yields valid characterizations: we prove that, for an arbitrary finite tree $X$ and a map $f \colon X\longrightarrow X$, equicontinuity of $f$ is equivalent to each of the following conditions: that the restriction $f\upharpoonright\bigcap_{m=1}^\infty f^m[X]$ is periodic, that 
$\fix(f^n)=\bigcap_{m=1}^\infty f^m[X]$ for some $n$, and that $\fix(f^n)$ is connected for all $n$; furthermore, any of these is also equivalent to the set $\per(f)$ being connected.

Further interesting results regarding equicontinuity of a map $f \colon X\longrightarrow X$ have been obtained by Mai (\cite{mai}) in the case where $X$ is a finite graph, and by Camargo, Rinc\'on and 
Uzc\'ategui (\cite{camargo-rincon-uzcategui}) in the case where $X$ is a dendrite. The former shows in 
\cite[Theorem 5.2]{mai} that, if $X$ is a finite graph, then $f$ is equicontinuous if and only if 
$\bigcap_{m=1}^\infty f^m[X] = \rec(f)$ (here $\rec(f)$ denotes the set of recurrent points of $f$, to be defined later in Definition~\ref{def:moved}; for the moment just note that $\per(f)\subseteq\rec(f)$); the latter  proves in 
\cite[Theorem~4.12]{camargo-rincon-uzcategui} that, if $X$ is a dendrite, then $f$ is equicontinuous if and only if 
$\cl_X(\per(f))=\bigcap_{m=1}^\infty f^m[X]$ plus an extra condition having to do with the $\omega$-limit sets of $f$. Obtaining a simultaneous strengthening of these two results at the expense of considering a less general class of spaces, we prove that, in the case where $X$ is a finite tree, $f$ is equicontinuous if and only if $\per(f)=\bigcap_{m=1}^\infty f^m[X]$.

Another concept that will play a central role in this paper is that of an expanding arc. To motivate this concept consider a nonnegative $\alpha\in\mathbb R$ and the map
$f_\alpha \colon \mathbb R\longrightarrow\mathbb R$ defined by
\begin{equation}\label{exe1}
f_\alpha(x)=\alpha x, \hspace{.5cm} \mbox{for each } x \in X.
\end{equation}
It is readily checked that $f_\alpha$ is equicontinuous if and only if $0\leq\alpha\leq 1$, whereas if $\alpha>1$ then $f_\alpha$ fails to be equicontinuous at every $x\in\mathbb R$. Intuitively speaking, maps that expand the real line fail to be equicontinuous. Note that for the map $f_\alpha$ defined in (\ref{exe1}), with $\alpha>1$, we have 
$I\subsetneq f_\alpha^n[I]$ for all $n\in\mathbb N$, where $I=[0,1]$ is the unit interval. This leads to the following definition.

\begin{definition}
Let $(X,f)$ be a discrete dynamical system, and let $A \subseteq X$ be an arc. We say that $A$ is an 
{\bf $f$-expanding arc} if there exists $n\in\mathbb N$ such that $A \subsetneq f^n[A]$.
\end{definition}

Hence the map $f_\alpha$ defined in (\ref{exe1}) fails to be equicontinuous if and only if $[0,1]$ is 
$f_\alpha$-expanding. Surprisingly, something like this very simple characterization still holds in more general situations. Namely, in~\cite[Theorems~3.1 and 3.7]{ivon-salvador}, Vidal-Escobar and Garc\'{\i}a-Ferreira established the following result (in the case $k=2$ they assume $f$ is surjective, but the general case follows from the proof of~\cite[Theorem~1.2]{bruckner-ceder}):

\begin{proposition}\label{simple-f-expanding}
Let $X$ be a $k$-od for some $k\geq 2$ and $f:X\longrightarrow X$ be a map where $f[X]$ is not a singleton. Then $f$ is not equicontinuous if and only if $X$ contains an $f$-expanding arc.
\end{proposition}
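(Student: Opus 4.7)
The plan is to prove the two directions separately. For the \emph{if} direction, assume $A \subsetneq f^n[A]$ for some arc $A$ and $n \in \mathbb{N}$. Iteration yields the increasing chain $A \subsetneq f^n[A] \subseteq f^{2n}[A] \subseteq \cdots$, whose closure $B = \overline{\bigcup_k f^{kn}[A]}$ is an $f^n$-invariant subcontinuum of $X$ properly containing $A$. If $f$ were equicontinuous, uniform equicontinuity of the subfamily $\{f^{kn}\}_k$ on the compact space $X$ together with Arzel\`a--Ascoli would produce a uniformly convergent subnet $f^{k_j n} \to g \in E(X,f)$, whose Hausdorff-limit image satisfies $g[A] = B \supsetneq A$. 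Exploiting Ellis-theoretic properties of equicontinuous systems --- where $E(X,f)$ acts reversibly on the closed invariant set $B$ --- forces $f^n|_B$ to preserve enough structure (injectivity, or an invariant length) to rule out the strict inclusion $A \subsetneq f^n[A]$, yielding the contradiction.

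For the \emph{only if} direction, assume $f$ is not equicontinuous. There then exist $\varepsilon > 0$, a point $x_0 \in X$, points $y_m \to x_0$ with $y_m \neq x_0$, and exponents $n_m \in \mathbb{N}$ with $d(f^{n_m}(x_0), f^{n_m}(y_m)) > \varepsilon$. The arcs $A_m := [x_0, y_m]$ shrink to $\{x_0\}$, but their connected images $f^{n_m}[A_m]$ each have diameter exceeding $\varepsilon$. Because $X$ is a $k$-od --- a finite tree with one branch point and $k$ edges --- pigeonholing on the edges lets me pass to a subsequence where each $A_m$ sits in a fixed edge and each $f^{n_m}[A_m]$ traverses a fixed pair of edges. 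A Hausdorff limit of $\{f^{n_m}[A_m]\}$ then yields a subtree $B \subseteq X$ of diameter at least $\varepsilon$, from which the desired $f$-expanding arc $A^\star$ is extracted.

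The main obstacle is precisely this final extraction: converting approximate-expansion data --- vanishing arcs with macroscopic images --- into a genuine strict inclusion $A^\star \subsetneq f^N[A^\star]$ for a single arc and a single exponent. Because the $A_m$ themselves collapse to a point, $A^\star$ must be located inside the limit subtree $B$ and shown to embed properly into some $f^N[A^\star]$. The leverage is the $k$-od's minimal branching: only finitely many ``arc types'' exist (determined by which edges are involved), so iterated pigeonholing over the pairs $(A_m, n_m)$ stabilizes the types and lets one identify an $A^\star$ which is embedded inside $f^{n_\ell - n_m}[A^\star]$ for appropriately chosen indices; persistence of the $\varepsilon$-gap along the iterates then forces the inclusion to be strict. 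This argument leans essentially on the $k$-od's simple topology, and its extension to arbitrary finite trees requires the more elaborate machinery developed in the main body of the present paper.
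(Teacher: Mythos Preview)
The paper does not prove this proposition --- it is quoted from~\cite{ivon-salvador} (for $k\geq 3$) and~\cite{bruckner-ceder} (for $k=2$) and then used as a black box inside Theorem~\ref{non-equi-implies-f-expanding} --- so there is no in-paper argument to compare against, and I assess your sketch on its own merits. Your \emph{if} direction is roughly on track but underspecified: mere injectivity of $f^n|_B$ does not preclude $A\subsetneq f^n[A]$ (a homeomorphism of a tree can strictly enlarge a subarc), so the parenthetical ``injectivity, or an invariant length'' is doing vague work. A clean completion is to note that the chain $(f^{kn}[A])_k$ is increasing, whence $f^n[B]=B$; then $f^n|_B$ is an equicontinuous surjection on a finite tree and hence periodic by Proposition~\ref{nuevo1}, giving $f^{nm}[A]=A$ for some $m$ and contradicting the strict chain $A\subsetneq f^n[A]\subseteq\cdots\subseteq f^{nm}[A]$.

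Your \emph{only if} direction has a genuine gap, which you yourself flag as ``the main obstacle'' and do not close. The arcs $A_m$ collapse to $\{x_0\}$, so the candidate $A^\star$ must live inside the Hausdorff limit $B$ of the images $f^{n_m}[A_m]$; but the decisive inclusion $A^\star\subseteq f^{n_\ell-n_m}[A^\star]$ is then asserted with no mechanism behind it. The only iterates you control are $f^{n_m}$ applied to the \emph{shrinking} arcs $A_m$, not any $f^N$ applied to the \emph{fixed} arc $A^\star$, and stabilizing ``arc types'' by pigeonhole tells you where the sets $f^{n_m}[A_m]$ sit, not where $f^N$ sends $A^\star$. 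Nothing in your data links $f^N[A^\star]$ back to $A^\star$; the ``persistence of the $\varepsilon$-gap'' refers to the diameters of $f^{n_m}[A_m]$, which are different objects. The cited proofs do not attempt a Hausdorff-limit extraction of this kind; they track the orbit of the non-equicontinuity point and produce fixed points of suitable iterates directly.
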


In this paper, we generalize Proposition~\ref{simple-f-expanding} from $k$-ods to arbitrary finite graphs. Our proof 
of this generalization uses at a crucial point a highly nontrivial Ramsey-theoretic result (Hindman's theorem). 
Hence, our result is not a direct use of the proofs presented in \cite[Theorem 1.2]{bruckner-ceder} and
\cite[Theorems~3.1 and 3.7]{ivon-salvador}.
 
Another result of Vidal-Escobar and Garc\'{\i}a-Ferreira~(\cite[Theorem~3.7]{ivon-salvador}) is that 
for each map $f:X\longrightarrow X$, where $X$ is a $k$-od with $k\geq 3$, if $f^u$ is continuous for {\em every} nonprincipal ultrafilter $u$ then $f$ is equicontinous (note that the converse implication is trivially true as a consequence of the observation right after Definition~\ref{def:ellis}). Hence if $f$ is not equicontinuous then for some nonprincipal ultrafilter $u,$ $f^u$ is not continuous. In~\cite{ivon-salvador} the authors consider the possibility that, for some nonprincipal ultrafilter $v,$ distinct from $u,$ $f^v$ might be continuous.

In this paper we show that the possibility mentioned in the previous line cannot occur by proving that the statement from the preceding paragraph is true with {\em every} replaced by {\sl some}, even if $X$ is a finite tree rather than just a $k$-od.  As a consequence of this, if $f$ fails to be equicontinuous with $X$ a finite tree, then 
{\em every} element $g\in E(X,f)^*$ fails to be continuous. Thus, for maps 
$f \colon X\longrightarrow X$ on a finite tree $X$, we have a strong dichotomy by means of which either every element $g\in E(X,f)^*$ is continuous, or every element $g\in E(X,f)^*$ is discontinuous, according to whether or not $f$ is equicontinuous. This is a direct generalization of a result of Szuca~(\cite[Theorem~2]{szuca}), who obtains the same dichotomy for maps in an arc. This result is therefore worth stating explicitly.

\begin{theorem}
Let $(X,f)$ be a discrete dynamical system, where $X$ is a finite tree. Then, either every element of $E(X,f)^*$ is continuous, or every element of $E(X,f)^*$ is discontinuous.
\end{theorem}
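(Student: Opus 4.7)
The plan is to derive the theorem as an immediate corollary of the Main Theorem (Theorem~\ref{mainthm}), specifically the equivalence asserting that $f$ is equicontinuous if and only if $f^u$ is continuous for \emph{some} nonprincipal ultrafilter $u$ on $\mathbb{N}$. Once that equivalence is in hand, the dichotomy follows by splitting on whether $f$ is equicontinuous, and checking each case independently.

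First, suppose $f$ is equicontinuous. By the observation immediately following Definition~\ref{def:dynamicalsystem}, equicontinuity at a point is a pointwise closed condition, so the closure $E(X,f) = \overline{\{f^n : n \in \mathbb{N}\}}$ in $X^X$ is also equicontinuous at every point of $X$. Since equicontinuity of a family at $x$ forces continuity at $x$ of each member, every $g \in E(X,f)$ is continuous, and in particular every element of $E(X,f)^* \subseteq E(X,f)$ is continuous.

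Second, suppose $f$ is not equicontinuous. By the Main Theorem (specifically, the direction which is the genuinely new content of the paper and which relies on Hindman's theorem), there is no nonprincipal ultrafilter $u$ for which $f^u$ is continuous. Using the identification $E(X,f)^* = \{f^u : u \text{ is a nonprincipal ultrafilter on } \mathbb{N}\}$ from the discussion after Definition~\ref{def:ellis}, every element of $E(X,f)^*$ is discontinuous.

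The real work, of course, is not in this corollary but in establishing the Main Theorem's equivalence; the main obstacle is proving that failure of equicontinuity yields discontinuity of \emph{every} $f^u$ (not merely of some $f^u$), which is precisely the strengthening over \cite{ivon-salvador} that the authors announce and which motivates the Ramsey-theoretic tools introduced later in the paper.
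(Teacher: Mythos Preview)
Your proposal is correct and mirrors the paper's own treatment: the dichotomy is stated as an explicit consequence of the Main Theorem, following immediately from the equivalence $(\mathrm{a})\iff(\mathrm{h})\iff(\mathrm{i})$ together with the identification $E(X,f)^*=\{f^u\mid u\in\mathbb N^*\}$. One small caveat on your closing remark: the Hindman-theoretic argument in the paper is used to establish $(\mathrm{e})\Rightarrow(\mathrm{a})$ (Theorem~\ref{non-equi-implies-f-expanding}), while the step from the existence of an $f$-expanding arc to the discontinuity of every $f^u$ proceeds via the $g$-backward/$g$-divergent sequence machinery of Section~\ref{sect:ultrafilters}; both pieces are needed for the direction you invoke, but Hindman enters specifically in the former.
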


We now state the Main Theorem of this paper.

\begin{mainthm}\label{mainthm}
Let $X$ be a finite tree and $f:X\longrightarrow X$ be a map. Then, the following are equivalent:
\begin{enumerate}
\item[\emph{(a)}] $f$ is equicontinuous;
\item[\emph{(b)}] there is an $n\in\mathbb N$ such that the restriction of $f^n$ to $\bigcap_{m=1}^\infty f^m[X]$ 
        is the identity map;
\item[\emph{(c)}] there exists an $n\in\mathbb N$ such that $\fix(f^n)=\bigcap_{m=1}^\infty f^m[X];$
\item[\emph{(d)}] $\per(f)=\bigcap_{m=1}^\infty f^m[X];$
\item[\emph{(e)}] there is no $f$-expanding arc in $X;$
\item[\emph{(f)}] for every $n\in\mathbb N$, the set $\fix(f^n)$ is connected;
\item[\emph{(g)}] the set $\per(f)$ is connected;
\item[\emph{(h)}] for every nonprincipal ultrafilter $u$, the function $f^u$ is continuous \emph{(}i.e., every element of 
        $E(X,f)^*$ is continuous\emph{)};
\item[\emph{(i)}] for some nonprincipal ultrafilter $u$, the function $f^u$ is continuous \emph{(}i.e., some element of 
        $E(X,f)^*$ is continuous\emph{)}.
\end{enumerate}
\end{mainthm}

\begin{remark}\label{aftermainthm}
Some remarks about the equivalences from Theorem~\ref{mainthm}:

\begin{enumerate}
\item The equivalence between (e), (f) and (g) will be established not only for finite trees, but for arbitrary dendrites.
\item The equivalence between (a), (e) and (h) shows that \cite[Theorem~3.7]{ivon-salvador} can be extended
         from $k$-ods to finite trees. This is a partial answer to \cite[Question~3.10]{ivon-salvador}.
\item The equivalences $(\mathrm{a})\iff(\mathrm{x})$, where $\mathrm{x}\in\{\mathrm{b},\mathrm{c},\ldots,\mathrm{i}\}$, fail if we allow $X$ to be an arbitrary dendrite, and 
         even if $X$ is merely a dendrite all of whose branching points are of finite order (see \S 4.2), or (with the 
         possible exception of $(\mathrm{a})\iff(\mathrm{d})$, see Remark~\ref{rem:aiffd}) if $X$ is merely a dendrite with finitely many 
         branching points (see \S 4.1).
\item Further conditions equivalent to equicontinuity of a map $f$ on a space $X$ have been established 
         in~\cite[Theorem~2, p.~62]{sun2} for $X$ a finite tree, in~\cite[Theorem~5.2]{mai} when $X$ is a finite graph, 
         and in~\cite[Theorem~4.12]{camargo-rincon-uzcategui} in the case of $X$ an arbitrary dendrite.
\end{enumerate}
\end{remark}

The paper is structured around the equivalence that constitutes its main result (Theorem~\ref{mainthm}). In Section~2 we begin by proving the equivalence of items (a), (b), (c) and (d), which is a fairly elementary result, and the rest of the section is devoted to the study of expanding arcs, starting with the equivalence of (e) and (f), and concluding with the implication from (e) to (a). Then, in Section~3, we establish the equivalence between (e) and (g), in order to later on focus on ultrafilter-limit functions to establish that (i) implies (e) (this finishes the main theorem, since the implication from (h) to (i) is obvious and that from (a) to (h) is well-known). Finally, in Section~4 we 
describe the examples that exhibit the failure of all these characterizations in the context of arbitrary dendrites, and state some questions that remain open.


\section{Equicontinuity and expanding arcs}

Given a subset $A$ of a space $X,$ we denote by either $\overline{A}$ or $\cl_X(A),$ the closure of $A$ in $X.$ 
The interior of $A$ in $X$ is denoted by $\int_X(A).$ We begin by stating some standard results that will be used. 

\begin{proposition}\label{lem:referee}
Let $(A_n)_{n\in\mathbb N}$ be a decreasing sequence of closed subsets of a compact space $X$, and let $A=\bigcap_{n=1}^\infty A_n$. Then,
\begin{enumerate}
\item[\emph{(1)}] if $U$ is an open set containing $A$, then $A_n\subseteq U$ for all sufficiently large $n$;
\item[\emph{(2)}] if each $A_n$ is nonempty, then so is $A$;
\item[\emph{(3)}] if every $A_n$ is connected, then so is $A$;
\item[\emph{(4)}] if $f:X\longrightarrow Y$ is a map, then $\bigcap_{n=1}^\infty f[A_n]=f[A]$.
\end{enumerate}
\end{proposition}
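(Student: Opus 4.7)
The plan is to establish part (1) first by the finite intersection property characterization of compactness, and then derive (2), (3) and (4) from (1) together with standard compactness arguments. For (1), I would consider the decreasing sequence of closed sets $B_n := A_n \cap (X\setminus U)$; by hypothesis $\bigcap_{n=1}^\infty B_n = A \cap (X\setminus U) = \emptyset$, so compactness forces some $B_N$ to already be empty, which yields $A_n \subseteq U$ for all $n\geq N$.

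Part (2) is essentially the finite intersection property: since the $A_n$ form a decreasing chain of nonempty closed sets, every finite subfamily has nonempty intersection (equal to its smallest member), so compactness forces $A\neq\emptyset$. For (3), I would argue by contradiction: assume $A = B\cup C$ is a separation into two nonempty disjoint closed subsets. As $X$ is a compact metric (hence normal) space, there are disjoint open $U\supseteq B$ and $V\supseteq C$. Applying (1) to the open neighbourhood $U\cup V$ of $A$ yields an $n$ with $A_n\subseteq U\cup V$. But $A_n\supseteq A$ meets $U$ (through $B$) and $V$ (through $C$), producing a separation of $A_n$ and contradicting its connectedness.

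For (4), the inclusion $f[A]\subseteq\bigcap_{n=1}^\infty f[A_n]$ is immediate from $A\subseteq A_n$. For the reverse, given $y\in\bigcap_{n=1}^\infty f[A_n]$, set $C_n := A_n\cap f^{-1}(y)$; continuity of $f$ together with Hausdorffness of $Y$ (automatic in a metric space) makes $f^{-1}(y)$ closed, so $(C_n)_{n\in\mathbb N}$ is a decreasing sequence of nonempty closed subsets of $X$. Part (2) then supplies a point $x\in\bigcap_{n=1}^\infty C_n = A\cap f^{-1}(y)$, whence $y = f(x)\in f[A]$.

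I do not anticipate any real obstacle: all four items are classical compactness arguments, and part (1) is doing the heavy lifting. The only nuance is to keep in mind that the ambient compact space $X$ here is a metric space, so the Hausdorffness (and hence normality) needed in the proof of (3), together with the closedness of $\{y\}$ needed in (4), is guaranteed without further comment.
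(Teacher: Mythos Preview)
Your proposal is correct and follows essentially the same approach as the paper. The paper defers (1) and (3) to citations from Engelking and proves (4) exactly as you do via the nonempty closed sets $A_n\cap f^{-1}(y)$; the only cosmetic difference is that the paper obtains (2) by applying (1) with $U=\varnothing$, whereas you invoke the finite intersection property directly.
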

\begin{proof}
Parts (1) and (3) follow from \cite[Corollary~3.1.5 and Corollary~6.1.19]{engelking}. Part (2) follows from
(1) with $U = \varnothing.$ To show part (4), it is enough to verify that $\bigcap_{n=1}^\infty f[A_n] \subseteq f[A]$.
Let $y \in \bigcap_{n=1}^\infty f[A_n].$ Since $(f^{-1}[y] \cap A_n)_{n\in\mathbb N}$ is a decreasing sequence of closed, nonempty subsets of $X$; by (2), $f^{-1}[y] \cap A \ne \varnothing$ and then $y \in f[A].$
\end{proof}

If $(X,f)$ is a discrete dynamical system with $X$ compact, then by Proposition~\ref{lem:referee}, 
$\bigcap_{m=1}^\infty f^m[X]$ is a nonempty compact subspace of $X$ satisfying 
$$
f\left[\bigcap_{m=1}^\infty f^m[X]\right]=\bigcap_{m=1}^\infty f^m[X].
$$ 
This means that the restricted map $f\upharpoonright\bigcap_{m=1}^\infty f^m[X]$ is onto $\bigcap_{m=1}^\infty f^m[X]$. In the case where $X$ is a connected space, so is $\bigcap_{m=1}^\infty f^m[X]$, again by Proposition~\ref{lem:referee}.

\subsection{Basic lemmas, definitions, and the first equivalences}

Before delving deep into the study of dendrites and finite trees, we state two general lemmas (on arbitrary metric spaces) containing some useful consequences of the failure of equicontinuity of a map. First note that a map $f \colon X\longrightarrow X$ fails to be equicontinuous at the point $x\in X$ if and only if there exists an $\varepsilon>0$, a sequence of points $(x_k)_{k\in\mathbb N}$ converging to $x$, and an increasing sequence of indices $(n_k)_{k\in\mathbb N}$ such that $d(f^{n_k}(x_k),f^{n_k}(x)) > \varepsilon$ for all 
$k\in\mathbb N$. In this case we will say that $\varepsilon$, $(x_k)_{k\in\mathbb N}$, and $(n_k)_{k\in\mathbb N}$ {\em witness} the failure of equicontinuity of $f$ at $x$.

\begin{lemma}\label{lem:f-bad-fn-bad}
Let $X$ be a metric space, let $f \colon X\longrightarrow X$ be a map, and suppose that $f$ fails to be equicontinuous at $x\in X$. Then, for every $n\in\mathbb N$, 
\begin{enumerate}
\item[\emph{(1)}] $f$ fails to be equicontinuous at $f^n(x)$, and 
\item[\emph{(2)}] there exists an $0 \leq i<n$ such that $f^n$ fails to be equicontinuous at $f^i(x)$.
\end{enumerate}
\end{lemma}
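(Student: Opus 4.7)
My plan is to treat the two parts separately: part (1) by a direct construction of witnesses and part (2) by contrapositive. Throughout I will use the terminology that an $\varepsilon$, a convergent sequence $(x_k)_{k\in\mathbb N}$, and a strictly increasing sequence of indices $(n_k)_{k\in\mathbb N}$ are \emph{witnesses} to the failure of equicontinuity at $x$, as stated in the paragraph immediately preceding the lemma.

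For part (1), I would take witnesses $\varepsilon,(x_k),(n_k)$ at $x$ and push them forward through $f^n$. Discarding the (finitely many) indices for which $n_k<n$, set $y_k=f^n(x_k)$ and $j_k=n_k-n$. Continuity of $f^n$ at $x$ gives $y_k\to f^n(x)$, while the identities $f^{j_k}(y_k)=f^{n_k}(x_k)$ and $f^{j_k}(f^n(x))=f^{n_k}(x)$, together with the original inequality $d(f^{n_k}(x_k),f^{n_k}(x))>\varepsilon$, show that $\varepsilon,(y_k),(j_k)$ witness the failure of equicontinuity of $f$ at $f^n(x)$.

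For part (2), I would argue by contradiction: assume that $f^n$ is equicontinuous at every $f^i(x)$ with $0\leq i<n$, and derive equicontinuity of $f$ at $x$. Given $\varepsilon>0$, for each such $i$ the assumption yields a $\delta_i>0$ such that $d(f^i(x),z)<\delta_i$ forces $d((f^n)^k(f^i(x)),(f^n)^k(z))\leq\varepsilon$ for every $k\geq 0$. Continuity of $f^i$ at $x$ then supplies an $\eta_i>0$ with $d(x,y)<\eta_i\Rightarrow d(f^i(x),f^i(y))<\delta_i$. Setting $\delta=\min_{0\leq i<n}\eta_i$ and decomposing an arbitrary index as $m=nk+i$ with $0\leq i<n$ (so that $(f^n)^k(f^i(y))=f^m(y)$), one obtains $d(f^m(x),f^m(y))\leq\varepsilon$ whenever $d(x,y)<\delta$, contradicting the assumed failure of equicontinuity of $f$ at $x$.

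The main subtlety, such as it is, sits in part (2): equicontinuity of the single map $f^n$ controls only the sparse subfamily $\{f^{nk}\mid k\in\mathbb N\}$, so one must invoke the continuity of each of the $n$ iterates $f^0,f^1,\ldots,f^{n-1}$ separately to cover the remaining residues modulo $n$ and then combine the resulting finitely many radii. Apart from this piece of bookkeeping, both parts are purely formal manipulations of the definition; in particular, the assumption that $X$ is a finite tree plays no role in this lemma, and both statements hold verbatim for any continuous self-map of any metric space.
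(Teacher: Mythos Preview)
Your proof of part (1) is essentially identical to the paper's (push the witnesses forward through $f^n$ and subtract $n$ from the indices; the paper phrases the truncation as ``assume without loss of generality that $n_1>n$'', which you should also do so that all $j_k$ land in $\mathbb N$).

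For part (2), however, you take a genuinely different route. The paper argues \emph{directly}: from the witnesses $\varepsilon,(x_k),(n_k)$ at $x$, it applies the pigeonhole principle to the residues $n_k\bmod n$, passes to a subsequence with $n_k\equiv i\pmod n$ for one fixed $i$, and then observes that $(f^i(x_k))$ together with the indices $(m_k)$ (where $n_k=nm_k+i$) and the same $\varepsilon$ witness the failure of equicontinuity of $f^n$ at $f^i(x)$. Your contrapositive instead assumes equicontinuity of $f^n$ at every $f^i(x)$ and glues the $n$ resulting radii together via continuity of $f^0,\ldots,f^{n-1}$ at $x$. Both arguments are short and correct; the paper's version has the mild advantage of producing explicit witnesses (in keeping with the ``witness'' language used when the lemma is invoked later), while yours makes the finite bookkeeping over residue classes more transparent. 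One small wrinkle in your write-up: equicontinuity of $f^n$ only delivers the bound $d((f^n)^k(f^i(x)),(f^n)^k(z))\leq\varepsilon$ for $k\in\mathbb N$, not for $k=0$, so the iterates $f^m$ with $1\leq m<n$ must be covered separately---most simply by also requiring $\delta_i\leq\varepsilon$.
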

\begin{proof}
Suppose that $\varepsilon>0$, the sequence of points $(x_k)_{k\in\mathbb N}$, and the sequence of indices 
$(n_k)_{k\in\mathbb N}$ witness the failure of equicontinuity of $f$ at $x$, and let $n\in\mathbb N$. To prove (1), assume without loss of generality that $n_1>n$; then, the sequence $(f^n(x_k))_{k\in\mathbb N}$ (which converges to $f^n(x)$ by continuity of the function $f^n$), and the increasing sequence $(n_k-n)_{k\in\mathbb N}$ of natural numbers, along with $\varepsilon$, witness the failure of equicontinuity of $f$ at $f^n(x)$. This shows (1). For (2), apply the pigeonhole principle to assume, without loss of generality, that there is a fixed $0 \leq i<n$ such that $n_k\equiv i\mod n$ for all $k\in\mathbb N$. Let $m_k$ be such that $n_k=n m_k+i$; then, the sequence $(f^i(x_k))_{k\in\mathbb N}$, which converges to $f^i(x)$, along with the increasing sequence 
$(m_k)_{k\in\mathbb N}$ of natural numbers and $\varepsilon$, witness the failure of equicontinuity of $f^n$ at 
$f^i(x)$.
\end{proof}

Before considering the specific case of dendrites, we introduce some more definitions and a general result that 
shall be used later.

\begin{definition}\label{def:moved}
Let $X$ be a metric space, and let $f \colon X\longrightarrow X$ be a map.
\begin{enumerate}
\item The {\bf $\omega$-limit set} of $f$ at $x \in X,$ is the set of all points $y \in X$ for which there is an increasing 
         sequence $(n_i)_{i \in \mathbb N}$ with $\lim_{i \to \infty}f^{n_i}(x) = y$; this set is denoted by $\omega(x,f)$.
\item A point $x\in X$ is a {\bf recurrent point} if $x\in\omega(x,f)$; the set of recurrent points of $f$ is denoted by
         $\rec(f)$.
\end{enumerate}
\end{definition}

It is immediate that every periodic point is recurrent; the converse is not necessarily true. The next proposition follows from known results and will be used for our Main Theorem.

\begin{proposition}\label{nuevo1}
If $X$ is a finite tree and $f \colon X\longrightarrow X$ is an equicontinuous surjective map, then $f$ is a homeomorphism which furthermore is periodic.
\end{proposition}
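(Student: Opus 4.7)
The plan is two-fold: first I would show that $f$ must be a homeomorphism, and then reduce the periodicity question to an application of Theorem~\ref{thm:bruck-ced} on each edge of the tree. For the homeomorphism part, on the compact metric space $X$ the equicontinuous family $\{f^n : n \in \mathbb{N}\}$ has compact closure in $C(X,X)$ under the supremum metric, so some subsequence $f^{n_k}$ converges uniformly to a continuous $g \colon X \longrightarrow X$. The limit $g$ is surjective, because for each $y\in X$ one can choose $x_k$ with $f^{n_k}(x_k)=y$ and extract a convergent subsequence $x_k\to x$, whence $y=\lim f^{n_k}(x_k)=g(x)$. Passing to a further subsequence so that $m_k = n_{k+1} - n_k \to \infty$, the identity $f^{n_{k+1}}(x) = f^{m_k}(f^{n_k}(x))$ together with uniform equicontinuity of the family $\{f^{m_k}\}$ yields $f^{m_k}(y) \to y$ pointwise for all $y \in g[X] = X$; equicontinuity then upgrades this to uniform convergence $f^{m_k} \to \mathrm{id}$. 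Extracting a uniform limit $h$ of $\{f^{m_k-1}\}$ produces a continuous two-sided inverse of $f$ (since $f\circ h=\lim f^{m_k}=\mathrm{id}=\lim f^{m_k}=h\circ f$), so $f$ is a homeomorphism.

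For periodicity, let $V \subseteq X$ be the finite set of endpoints and branch points of the tree $X$. Any homeomorphism of $X$ must permute $V$, because endpoints and branch points are topologically characterized, so some power $f^N$ fixes every point of $V$. If $v_1,v_2\in V$ are consecutive vertices (i.e., the unique arc $[v_1,v_2]$ joining them in $X$ contains no other point of $V$), then $f^N\bigl[[v_1,v_2]\bigr]$ is an arc from $v_1$ to $v_2$, and by the uniqueness of arcs in a tree it equals $[v_1,v_2]$. Hence $f^N$ restricts to a self-homeomorphism of the arc $[v_1,v_2]$ fixing both endpoints; once $[v_1,v_2]$ is identified with $[0,1]$, this restriction is strictly increasing. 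The iterates $\{(f^N)^k\upharpoonright[v_1,v_2]\}_{k\in\mathbb{N}}$ form an equicontinuous family of surjections of $[v_1,v_2]$, so Theorem~\ref{thm:bruck-ced} yields $(f^N)^2 = \mathrm{id}$ on $\bigcap_{m=1}^\infty (f^N)^m\bigl[[v_1,v_2]\bigr] = [v_1,v_2]$. The only increasing involution of an arc fixing both endpoints is the identity (if it moved some interior $x$ to $y>x$, then $y=f^N(x)<f^N(y)=x$, a contradiction), so $f^N$ is the identity on each edge of $X$, hence on all of $X$.

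The step I expect to be most delicate is showing that $f$ is injective. The conclusion that an equicontinuous surjection on a compact metric space is a homeomorphism is a classical consequence of Ellis's theorem that such a flow's enveloping semigroup is a compact topological group, but deriving it cleanly from first principles requires the careful uniform-convergence bookkeeping above; once past this hurdle, the remainder of the argument is a clean reduction to the arc case, leveraging the finiteness of the vertex set together with Theorem~\ref{thm:bruck-ced}.
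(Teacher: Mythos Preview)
Your argument is correct and takes a genuinely different route from the paper's own proof. The paper dispatches the proposition in three lines by citing external results: equicontinuity plus surjectivity force $f$ to be a pointwise-recurrent homeomorphism by results of Mai~\cite[Proposition~2.4]{mai} and Bruckner--Hu~\cite[Corollary~8]{bruckner-hu}, and then a theorem of Mai on pointwise-recurrent graph maps~\cite[Theorem~4.4]{mai2} upgrades pointwise recurrence to periodicity.

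Your approach, by contrast, is almost entirely self-contained within the paper. The homeomorphism half is obtained from first principles via Arzel\`a--Ascoli and a diagonal argument producing a continuous two-sided inverse; this is essentially the classical fact that the Ellis semigroup of an equicontinuous system on a compact metric space is a group of homeomorphisms, worked out by hand. The periodicity half exploits the finite combinatorics of the tree in a way the paper's route does not: a homeomorphism permutes the finite vertex set, so some power $f^N$ fixes every vertex and hence stabilises each edge; on each edge $f^N$ is then an increasing equicontinuous surjection of an arc, and Theorem~\ref{thm:bruck-ced} (already stated in the paper) forces $(f^N)^2=\mathrm{id}$ there, whence $f^N=\mathrm{id}$ by the monotonicity observation.

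What each buys: the paper's proof is shorter and situates the result in a broader framework (Mai's results hold for finite graphs, not just trees), but it is a black box to a reader without those references at hand. Your argument is longer but transparent, uses only machinery already present in the paper, and makes visible exactly where the finiteness of the vertex set is consumed. One minor remark: the hypothesis $m_k\to\infty$ in your first paragraph is never actually used---if the $m_k$ were bounded you would get $f^m=\mathrm{id}$ for some $m$ immediately---so that clause can be dropped.
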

\begin{proof}
By~\cite[Proposition~2.4]{mai} and~\cite[Corollary~8]{bruckner-hu}, cf.~\cite[Corollary~3.2]{mai}, $f$ is a homeomorphism that is pointwise-recurrent, i.e., such that $x \in \omega (x,f)$ for each $x \in X$. 
Hence, by~\cite[Theorem~4.4]{mai2}, $f$ is periodic.
\end{proof}

We now mention some standard facts about dendrites that will be used throughout the paper.

\begin{definition}
A {\bf dendrite} is a locally connected continuum without simple closed curves. 
\end{definition}

A map of a dendrite into itself has a fixed point (\cite[Theorem~10.31]{nadler}). Every subcontinuum of a dendrite is again a dendrite (\cite[Corollary~10.6]{nadler}), and every connected subset of a dendrite is arcwise connected (\cite[Proposition~10.9]{nadler}). If $X$ is a dendrite, $x,y\in X$ and $x \ne y,$ then there is a unique (closed) arc in $X$ joining $x$ and $y$; such an arc will always be denoted by $xy$. Since continuous images of connected sets must be connected, for any map $f \colon X\longrightarrow X$ and every $x,y\in X$ we have that $f(x)f(y)\subseteq f[xy]$.

Whenever $X$ is a dendrite and $Y$ is a subcontinuum of $X$, then there exists a retraction 
$r_Y \colon X\longrightarrow Y,$ called the {\em first point function}, such that for $x\in X$ and $y\in Y$, $r_Y(x)$ is the first point in the arc $xy$ (equipping such an arc with a linear order where $x\leq y$) that belongs to $Y.$ The mapping $r_Y$ does not depend on the specific $y\in Y$ (see~\cite[Lemmas~10.24, 10.25 and Terminology 10.26]{nadler}).

Finally, every dendrite is {\em uniformly locally arcwise connected}, that is, for every $\varepsilon>0$ there exists a $\delta>0$ such that, whenever $d(x,y)<\delta$ and $x \ne y$, the arc $xy$ must have diameter $<\varepsilon$ (as a matter of fact, every compact, connected and locally connected metric space has this property, which in this more general context must be phrased as: for every $\varepsilon>0$ there exists a $\delta>0$ such that whenever $d(x,y)<\delta$ and
$x \ne y,$ then there is an arc joining $x$ and $y$ with diameter $<\varepsilon$; see~\cite[Theorem~31.4]{willard}).

\begin{proposition}\label{fixpoint}
Let $X$ be a dendrite, let $f \colon X\longrightarrow X$ be a map, and let $x\in X$. If $Y\subseteq X \setminus \{x\}$ is a connected component of $X\setminus \{x\}$ such that $f(x)\in Y$, then 
$Y\cap\fix(f)\neq\varnothing$.
\end{proposition}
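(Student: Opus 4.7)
The plan is to apply the fixed-point theorem for dendrites to a suitable composition. Set $Z := Y \cup \{x\}$; I claim that $Z$ equals $\overline{Y}$ and is therefore a subcontinuum of $X$. Since $X$ is locally connected, the components of the open set $X \setminus \{x\}$ are open in $X$, so $Y$ is open. Because $X$ is connected, $Y$ cannot be clopen, and hence $x \in \overline{Y}$. Any further limit point of $Y$ outside $Y \cup \{x\}$ would lie in a distinct component $Y'$ of $X \setminus \{x\}$, which is open and disjoint from $Y$, a contradiction. So $\overline{Y} = Y \cup \{x\} = Z$. The same argument applied to $W := X \setminus Z$ (a union of open components of $X \setminus \{x\}$) yields $\overline{W} = W \cup \{x\}$, so the topological boundary of $Z$ in $X$ is exactly $\{x\}$.

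Next, let $r \colon X \longrightarrow Z$ be the first-point retraction onto $Z$, and define $g \colon Z \longrightarrow Z$ by $g = r \circ (f \upharpoonright Z)$. Since a subcontinuum of a dendrite is a dendrite, $Z$ has the fixed-point property, so $g$ admits a fixed point $p \in Z$.

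I finish by verifying $p \in Y \cap \fix(f)$. To see $p \neq x$, observe that $g(x) = r(f(x)) = f(x)$, because $f(x) \in Y \subseteq Z$; and $f(x) \neq x$ because $f(x) \in Y$ while $x \notin Y$. Hence $x$ is not fixed by $g$, so $p \in Z \setminus \{x\} = Y$. To see $f(p) = p$, consider two cases. If $f(p) \in Z$, then $p = g(p) = r(f(p)) = f(p)$, and we are done. If instead $f(p) \in W$, then for any $y \in Z$ the arc from $f(p)$ to $y$ must cross the boundary of $Z$ before reaching $y$; since that boundary is $\{x\}$, we get $r(f(p)) = x$, so $p = g(p) = x$, contradicting $p \in Y$. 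Therefore $f(p) \in Z$ and $p$ is a fixed point of $f$.

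The only subtle step is the identification of the topological boundary of $Z$ with $\{x\}$ (so that the first-point retraction collapses all of $W$ to $x$); once this structural fact about components of $X \setminus \{x\}$ in a locally connected continuum is in hand, the fixed-point theorem for dendrites does the remaining work.
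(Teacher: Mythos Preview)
Your proof is correct and follows essentially the same approach as the paper's: both form $Z=\overline{Y}=Y\cup\{x\}$, apply the fixed-point property of dendrites to $r_Z\circ(f\upharpoonright Z)$, and then verify that the resulting fixed point lies in $Y\cap\fix(f)$. You simply supply more detail where the paper writes ``it is now easy to check''; one minor quibble is that when $x$ is an endpoint the set $W$ is empty and your boundary computation $\overline{W}=W\cup\{x\}$ fails, but this is harmless since the case $f(p)\in W$ is then vacuous.
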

\begin{proof}
Notice that $\cl_X(Y)=Y\cup\{x\}$ is a subcontinuum of $X$ --hence $\cl_X(Y)$ is itself a dendrite. We consider the first point function $r_{\cl_X(Y)}:X\longrightarrow \cl_X(Y)$ and note that, for $y\notin \cl_X(Y)$, it must be the case that $r_{\cl_X(Y)}(y)=x$. Since $\cl_X(Y)$ is a dendrite, it has the fixed point property; therefore the map $r_{\cl_X(Y)}\circ (f\upharpoonright \cl_X(Y)) \colon \cl_X(Y)\longrightarrow \cl_X(Y)$ has a fixed point $y$. 
It is now easy to check that we must have $f(y)=y$.
\end{proof}

The following is another definition that will be crucial throughout the paper.

\begin{definition}\label{order}
Let $X$ be a dendrite and $k \in \mathbb N.$
\begin{enumerate}
\item The {\bf order} of a point $x\in X$ is the number of connected components of $X\setminus\{x\}$;
\item a point $x\in X$ is an {\bf endpoint} if its order is $1$, and a {\bf branching point} if its order is $\geq 3$;
\item $xy \subseteq X$ is a {\bf free arc} in $X,$ if no element of $xy \setminus\{x,y\}$ is a branching point;
\item for $k\geq 3$, $X$ is a {\bf $k$-od} if it contains exactly one branching point (called the {\bf vertex} of $X$), 
         which has order $k$; a {\bf $2$-od} is simply defined to be an arc (we do not specify a vertex in this case);
\item $X$ is a {\bf finite tree} if it has only finitely many branching points and each of these branching points has a 
         finite order.
\end{enumerate}
\end{definition}

Note that $xy \subseteq X$ is a free arc in $X$ if and only if $xy \setminus\{x,y\}$ is open in $X.$ In a general topological space $X$, the order of a point $x\in X$ is defined as the least cardinal number $\kappa$ such that, for every open neighbourhood $U$ of $x,$ there exists another open neighbourhood $V$ with $x\in V\subseteq U$ and $|\partial(V)|\leq\kappa$ (where $\partial(V)$ denotes the boundary of $V$ in $X$), cf.~\cite[Definition~9.3]{nadler}; this will be important towards the end of Section~4. If, however, the topological space $X$ under consideration is a dendrite, then Definition~\ref{order} agrees with the general definition just mentioned 
(see~\cite[Lemma~10.12, Theorem~10.13 and Corollary~10.20.1]{nadler}).

We now show the equivalence of the first four conditions in Theorem~\ref{mainthm}.

\begin{proposition}\label{cor:firstfour}
Let $X$ be a finite tree and let $f \colon X\longrightarrow X$ be a map. Then the following conditions are equivalent:
\begin{enumerate}
\item[\emph{(a)}] $f$ is equicontinuous;
\item[\emph{(b)}] for some $n\in\mathbb N$, the restriction $f^n\upharpoonright\bigcap_{m=1}^\infty f^m[X]$ is 
        the identity map;
\item[\emph{(c)}] for some $n\in\mathbb N$, $\fix(f^n)=\bigcap_{m=1}^\infty f^m[X]$;
\item[\emph{(d)}] $\per(f)=\bigcap_{m=1}^\infty f^m[X]$.
\end{enumerate}
\end{proposition}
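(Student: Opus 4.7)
The plan is to establish the cycle $(\mathrm{a}) \Rightarrow (\mathrm{b}) \Rightarrow (\mathrm{c}) \Rightarrow (\mathrm{d}) \Rightarrow (\mathrm{a})$. Throughout, set $K = \bigcap_{m=1}^\infty f^m[X]$. By Proposition~\ref{lem:referee}, $K$ is a nonempty subcontinuum of $X$ with $f[K]=K$, so $f \upharpoonright K \colon K \longrightarrow K$ is a continuous surjection; a short argument using the arcwise convexity of subdendrites (which forces every branching point of $K$ to already be a branching point of $X$) shows that $K$ is itself a finite tree.

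For $(\mathrm{a}) \Rightarrow (\mathrm{b})$, the equicontinuity of $f$ restricts to $f \upharpoonright K$, to which Proposition~\ref{nuevo1} applies and yields an $n \in \mathbb{N}$ with $f^n \upharpoonright K$ equal to the identity. The implications $(\mathrm{b}) \Rightarrow (\mathrm{c}) \Rightarrow (\mathrm{d})$ are then set-theoretic: (b) gives $K \subseteq \fix(f^n) \subseteq \per(f)$, while the reverse inclusion $\per(f) \subseteq K$ holds unconditionally, since $f^n(x) = x$ implies $x = f^{kn}(x) \in f^{kn}[X] \subseteq f^m[X]$ whenever $kn \geq m$. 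For $(\mathrm{d}) \Rightarrow (\mathrm{a})$, I would invoke Mai's theorem (\cite[Theorem~5.2]{mai}), which states that on a finite graph, $f$ is equicontinuous if and only if $\rec(f) = K$. The general chain $\per(f) \subseteq \rec(f) \subseteq K$ always holds (the last containment because $f^{n_i}(x) \to x$ with $n_i \to \infty$ forces $x \in f^m[X]$ for every $m$, by closedness of $f^m[X]$), so (d) squeezes $\rec(f)$ onto $K$ and Mai's theorem delivers equicontinuity.

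The only substantive step is $(\mathrm{a}) \Rightarrow (\mathrm{b})$, which reduces immediately to Proposition~\ref{nuevo1}. The main (minor) obstacle I anticipate is the standing verification that subcontinua of finite trees remain finite trees, so that Proposition~\ref{nuevo1} can be legitimately applied to $f \upharpoonright K$; everything else is routine.
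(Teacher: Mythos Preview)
Your proof is correct and follows the same overall strategy as the paper: both arguments run the cycle $(\mathrm{a})\Rightarrow(\mathrm{b})\Rightarrow(\mathrm{c})\Rightarrow(\mathrm{d})\Rightarrow(\mathrm{a})$, invoke Proposition~\ref{nuevo1} on the surjective restriction $f\upharpoonright K$ for the step $(\mathrm{a})\Rightarrow(\mathrm{b})$, and rely on Mai's~\cite[Theorem~5.2]{mai} as the key external input. The one difference worth noting is in $(\mathrm{d})\Rightarrow(\mathrm{a})$: the paper first treats the surjective case via~\cite[Theorem~4.14]{camargo-rincon-uzcategui} (a pointwise-periodic map on a dendrite is equicontinuous) and then reduces the general case to it using the part of Mai's theorem stating that $f$ is equicontinuous iff $f\upharpoonright K$ is; you instead appeal directly to the characterization ``$f$ equicontinuous $\iff\rec(f)=K$'' from the same theorem of Mai, sandwiching $\rec(f)$ between $\per(f)$ and $K$. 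Your route is slightly more economical, avoiding the extra citation and the surjective/non-surjective split.
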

\begin{proof}
We consider first the case where $f$ is surjective. Note that in such situation, $X=\bigcap_{m=1}^\infty f^m[X]$
and $f^n\upharpoonright\bigcap_{m=1}^\infty f^m[X] = f^n$ for each $n\in\mathbb N$. Moreover (b) asserts
that $f$ is periodic, (c) that $\fix(f^n)= X$ for some $n\in\mathbb N$, and (d) that $f$ is pointwise-periodic.
By Proposition~\ref{nuevo1}, $(a) \Rightarrow (b)$.  The implications $(b) \Rightarrow (c) \Rightarrow (d)$ are
obvious and, by ~\cite[Theorem~4.14]{camargo-rincon-uzcategui}, the implication $(d) \Rightarrow (a)$
holds not only on finite trees, but on every dendrite and with $f$ being any (surjective) map.

We now consider the case of an arbitrary (not necessarily surjective) map 
$f \colon X\longrightarrow X.$ Since every finite tree is, in particular, a finite graph, we may 
use~\cite[Theorem 5.2]{mai} to see that $f$ is equicontinuous if and only if so is 
$f\upharpoonright\bigcap_{m=1}^\infty f^m[X]$, and since the latter map is onto 
$\bigcap_{m=1}^\infty f^m[X]$ (and since $\fix(f^n)=\fix(f^n\upharpoonright\bigcap_{m=1}^\infty f^m[X])$ and 
$\per(f)=\per(f\upharpoonright\bigcap_{m=1}^\infty f^m[X])$), then the theorem follows from the surjective case.
\end{proof}

\subsection{Expanding arcs}

We now analyze some implications of the existence of expanding arcs.

\begin{lemma}\label{arc-iff-disconnection}
Let $X$ be a dendrite and let $f \colon X\longrightarrow X$ be a map. Then the following are equivalent:
\begin{enumerate}
\item[\emph{($e^\prime$)}] $X$ contains an $f$-expanding arc;
\item[\emph{($f^\prime$)}] for some $n\in\mathbb N$, the set $\fix(f^n)$ is disconnected;
\item[\emph{($j^\prime$)}] there exist points $x,y\in X$ and $n\in\mathbb N$ such that $x=f^n(x)$, 
       $y\neq f^n(y)$, and $y\in x f^n(y)$.
\end{enumerate}
\end{lemma}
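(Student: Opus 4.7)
The plan is to establish the three-way equivalence by proving $(j') \Rightarrow (e')$, $(j') \Rightarrow (f')$, $(f') \Rightarrow (j')$, and $(e') \Rightarrow (j')$, treating $(j')$ as a hub. The first three implications are fairly direct, while $(e') \Rightarrow (j')$ is the main obstacle.

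For $(j') \Rightarrow (e')$ I set $A := xy$; the image $f^n[A]$ is a subcontinuum of $X$ (hence itself a dendrite) containing both $f^n(x) = x$ and $f^n(y)$, so it contains the arc $xf^n(y) = xy \cup yf^n(y)$, where the decomposition follows from the hypothesis $y \in xf^n(y)$. The sub-arc $yf^n(y)$ is nondegenerate (since $y \ne f^n(y)$) and meets $A$ only at $y$, so $A \subsetneq f^n[A]$. For $(j') \Rightarrow (f')$ I apply Proposition~\ref{fixpoint} to $f^n$ at $y$: the component $Y$ of $X \setminus \{y\}$ containing $f^n(y)$ meets $\fix(f^n)$ in some point $z$. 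Since $y$ separates $x$ from $f^n(y)$, the point $x$ lies outside $Y$, and so the arc $xz$ passes through $y \notin \fix(f^n)$; because connected subsets of a dendrite are arcwise connected, $\fix(f^n)$ must then be disconnected. For $(f') \Rightarrow (j')$ I pick $x_1, x_2 \in \fix(f^n)$ in distinct components; the arc $x_1 x_2$ cannot lie entirely in $\fix(f^n)$ (else the two components would merge), so I can select a ``bridge'' sub-arc $uv \subseteq x_1 x_2$ with $u, v \in \fix(f^n)$ and $(u, v) \cap \fix(f^n) = \varnothing$. For any $y \in (u, v)$, the entry point $p := r_{uv}(f^n(y))$ lies in $uv$; taking $x$ to be whichever of $u$ or $v$ sits on the side of $y$ opposite to $p$, the arc $xf^n(y) = xp \cup pf^n(y)$ contains the sub-arc of $uv$ from $x$ to $p$, which itself contains $y$.

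The hard part is $(e') \Rightarrow (j')$. Assume $A = ab \subsetneq f^n[A]$ and fix $c \in A$ with $f^n(c) \notin A$; then $f^n(c)$ sits in a branch $V$ of $X \setminus A$ attached at $p := r_A(f^n(c)) \in A$. The retraction $h := r_A \circ f^n \upharpoonright A \colon A \to A$, being a self-map of an arc, has a fixed point, which yields either a fixed point of $f^n$ inside $A$ or (when $f^n$ of the fixed point of $h$ leaves $A$) a fixed point of $f^n$ inside a branch off $A$ via Proposition~\ref{fixpoint}. The main obstacle is that Proposition~\ref{fixpoint} naturally places fixed points on the \emph{same} side of a separating point as the image of the point, whereas $(j')$ requires $y$ to separate the fixed point $x$ from $f^n(y)$, the opposite configuration. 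The plan is a careful case analysis relating the locations of these fixed points to the entry points $r_A(f^n(c'))$ as $c'$ ranges over the open set $\{c'' \in A : f^n(c'') \notin A\}$; choosing $c'$ inside a suitable component of this set (one whose attachment point lies on the side of a chosen fixed point $x$ opposite to $c'$) produces the desired witness pair.
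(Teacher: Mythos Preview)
Your three ``easy'' implications $(j')\Rightarrow(e')$, $(j')\Rightarrow(f')$, and $(f')\Rightarrow(j')$ are all correct. The last of these is essentially the paper's argument (the paper skips the ``bridge'' sub-arc and works directly with any $a,b\in\fix(f^n)$ and any $y\in ab\setminus\fix(f^n)$, but your version is fine). Your $(j')\Rightarrow(f')$ is a clean shortcut the paper does not take; the paper instead closes the cycle as $(e')\Rightarrow(f')\Rightarrow(j')\Rightarrow(e')$.

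The genuine gap is in $(e')\Rightarrow(j')$: what you have written is a plan, not a proof, and the plan is underspecified at exactly the point where the difficulty lies. You correctly identify that Proposition~\ref{fixpoint} places fixed points on the \emph{same} side of a separating point as its image, whereas $(j')$ needs the opposite configuration, but the phrase ``choosing $c'$ inside a suitable component \ldots\ produces the desired witness pair'' does not explain why such a component must exist. In fact it need not for the given $n$: when both endpoints $a,b$ fail to be fixed by $f^n$ and the preimages $x,y\in A$ of $a,b$ occur in the ``wrong'' order along $A$, the paper has to pass to $f^{2n}$ to obtain the correct configuration (this is Case~3, second sub-case, of the paper's argument). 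Your sketch gives no indication that you would change the exponent, nor any mechanism for handling this case.

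Since you already have $(f')\Leftrightarrow(j')$, the cleanest fix is to replace your attempted $(e')\Rightarrow(j')$ with a direct proof of $(e')\Rightarrow(f')$. The paper does this by a three-case split on how many of $a,b$ are fixed by $f^n$: if both are fixed, any $c\in ab$ with $f^n(c)\notin ab$ disconnects $\fix(f^n)$; if exactly one is fixed, a single application of Proposition~\ref{fixpoint} produces a second fixed point on the far side of a suitable preimage; if neither is fixed, one locates preimages $x,y\in ab$ of $a,b$, and when they occur in the favorable order two applications of Proposition~\ref{fixpoint} finish the job, while in the unfavorable order one first iterates once more to obtain preimages in the favorable order for $f^{2n}$.
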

\begin{proof}\hfill

\noindent $(e^\prime) \Rightarrow (f^\prime)$ Let $ab$ be an $f$-expanding arc and fix an $n\in\mathbb N$ such that $ab\subsetneq f^n[ab]$. We consider three cases according to whether both, exactly one, or none of $a,b$ are fixed points for $f^n$. \vskip .2cm
\begin{description}
\item[Case 1] If $f^n(a)=a$ and $f^n(b)=b$, use the fact that $ab\subsetneq f^n[ab]$ to get a $c\in ab\setminus\{a,b\}$ such that $f^n(c)\notin ab$. In particular, $c\notin\fix(f^n)$ with $c\in ab$ and $a,b\in\fix(f^n)$, showing that $\fix(f^n)$ is not arcwise connected, so $\fix(f^n)$ is disconnected.\vskip .2cm

\item[Case 2] If $f^n(a)=a$ but $f^n(b)\neq b$, use the fact that $ab\subseteq f^n[ab]$ to get a $c\in ab\setminus\{a,b\}$ such that $f^n(c)=b$. Let $Z$ be the connected component of $X\setminus\{c\}$ containing 
$a.$ If $b \in Z$ then, since $Z$ is arcwise connected, we have $c \in ab \subseteq Z \subseteq X\setminus\{c\},$ a contradiction. Hence, $b \notin Z$. Letting $Y \ne Z$ be the connected component of $X\setminus\{c\}$ containing $b$, Proposition~\ref{fixpoint} guarantees the existence of a $d\in\fix(f^n)\cap Y$. Then we have $c\notin\fix(f^n)$, 
$a,d \in \fix(f^n)$, and $c\in ad$, showing that $\fix(f^n)$ is disconnected. \vskip .2cm

\item[Case 3]  If $f^n(a)\neq a$ and $f^n(b)\neq b$. Then (since $ab\subseteq f^n[ab]$) we can find 
$x,y\in ab\setminus\{a,b\}$ with $f^n(x)=a$ and $f^n(y)=b$. Note that $x \ne y$. Equip $ab$ with a 
linear order via a homeomorphism $:[0,1]\longrightarrow ab$ mapping $0$ to $a$ and $1$ to $b$. If $x< y$, then a couple of applications of Proposition~\ref{fixpoint} yield fixed points $c,d\in\fix(f^n)$ such that $x,y\in cd$; since $x,y\notin\fix(f^n)$, this shows that $\fix(f^n)$ is disconnected. If, on the other hand, we have $y < x$, then notice 
that $y\in ab=f^n(x)f^n(y)\subseteq f^n[xy]$, so there is a $y^\prime \in xy$ with $f^n(y^\prime)=y$; also, 
$x\in yb=f^n(y')f^n(y)\subseteq f^n[yy']$ and so there is an $x^\prime \in yy^\prime$ with $f^n(x')=x$. This way we have obtained $x^\prime < y^\prime$ with $f^{2n}(x^\prime)=f^n(x)=a$ and $f^{2n}(y^\prime)=f^n(y)=b$, thus, a couple of applications of Proposition~\ref{fixpoint} yield two fixed points $c,d\in\fix(f^{2n})$ with 
$x^\prime,y^\prime \in cd$; the fact that $x^\prime,y^\prime \notin\fix(f^{2n})$ implies then that $\fix(f^{2n})$ is disconnected, and we are done.
\end{description}

\noindent $(f^\prime) \Rightarrow (j^\prime)$ Let $n\in\mathbb N$ and $a,b\in\fix(f^n)$ be such that $ab\nsubseteq\fix(f^n)$. Then there is a $y\in ab$ with $f^n(y)\neq y$. Considering the first point function $r_{ab} \colon X\longrightarrow ab$ and the point $z=r_{ab}(f^n(y))$, we must have that either $y\in az$ or $y\in bz$. Since the situation is entirely symmetric, assume without loss of generality that $y\in az$ and let $x=a$. Then we have $f^n(x)=x$ and $y\in xz\subseteq xz\cup zf^n(y)=xf^n(y)$.

\noindent $(j')\Rightarrow (e^\prime)$ Under the assumptions we have $xy\subsetneq x f^n(y)=f^n(x) f^n(y)\subseteq f^n[xy]$ and so $xy$ is an $f$-expanding arc.
\end{proof}

We now proceed to prove that the failure of equicontinuity of a map on a finite tree implies the existence of an 
$f$-expanding arc. The next result allows us to restrict any map without expanding arcs from a 
finite tree to a simpler subcontinuum.

\begin{lemma}\label{tree-to-dendrite}
Let $X$ be a finite tree, let $f \colon X\longrightarrow X$ be a map and let $x \in \fix(f).$ If $X$ has 
no $f$-expanding arcs, then there is an $f$-invariant subspace $Y\subseteq X$ \emph{(}that is, $f[Y]\subseteq Y$\emph{)}, where $Y$ is a $k$-od for some $k \geq 2$, such that $x \in \int_X(Y)$.
\end{lemma}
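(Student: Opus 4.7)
Let $k$ denote the order of $x$ in $X$ and let $C_1, \ldots, C_k$ be the connected components of $X \setminus \{x\}$. Because $X$ is a finite tree, for each $i$ there is a first vertex $p_i \in \cl_X(C_i) \setminus \{x\}$ (a branching point or endpoint of $X$) making $[x, p_i]$ a free arc. The \emph{initial star} $S := \bigcup_{i=1}^k [x, p_i]$ is a $\max(k, 2)$-od with $x \in \int_X(S)$, and every set of the form $Y = \bigcup_i [x, a_i]$ with $a_i \in (x, p_i]$ is again a $\max(k, 2)$-od with $x \in \int_X(Y)$. So the plan reduces to choosing $a_i$'s such that $f[Y] \subseteq Y$.

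A key consequence of the hypothesis is automatic same-branch control: by condition $(j')$ of Lemma~\ref{arc-iff-disconnection} applied at $n = 1$ (using $x \in \fix(f)$), for every $y \in C_i$ with $f(y) \neq y$ one has $y \notin x f(y)$; in particular $f(y)$ cannot lie on $[x, p_i]$ strictly past $y$, nor past $p_i$ in $C_i$. Hence $f[[x, a_i]] \cap C_i \subseteq [x, a_i]$ for any $a_i \in (x, p_i]$, and the condition $f[Y] \subseteq Y$ reduces to the cross-branch requirement: for every pair $i \neq j$ and every $y \in [x, a_i]$ with $f(y) \in C_j$, one needs $d(x, f(y)) \leq d(x, a_j)$.

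Parametrize each $[x, p_i]$ by arc length and write $a_i(t)$ for the point at distance $t$ from $x$; for $i \neq j$ define
$$
b_{ij}(s) := \sup\{d(x, f(y)) : y \in [x, a_i(s)] \cap C_i, \; f(y) \in C_j \cap [x, p_j]\},
$$
taken to be zero if the set is empty. Each $b_{ij}$ is continuous, non-decreasing, and satisfies $b_{ij}(0) = 0$ by continuity of $f$ at $x$. A vector $t = (t_1, \ldots, t_k)$ with $t_i \in (0, \ell_i]$ (where $\ell_i = d(x, p_i)$) yields an $f$-invariant $Y(t) := \bigcup_i [x, a_i(t_i)]$ provided $b_{ij}(t_i) \leq t_j$ for all $i \neq j$; equivalently, provided $t$ is a positive super-fixed-point of the monotone operator $\Phi(t)_j := \max_{i \neq j} b_{ij}(t_i)$. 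I claim such a positive $t$ must exist under the hypothesis; otherwise, for every small $\varepsilon > 0$ the vector $(\varepsilon, \ldots, \varepsilon)$ fails $\Phi(t) \leq t$, producing $i \neq j$ with $b_{ij}(\varepsilon) > \varepsilon$. Passing to a sequence $\varepsilon_n \searrow 0$ and applying pigeonhole, $i$ and $j$ can be held constant, yielding $y_n \to x$ in $C_i$ with $f(y_n) \in C_j$ and $d(x, f(y_n)) > d(x, y_n) \to 0$.

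The main obstacle --- and the step I expect to be hardest --- is to extract from $(y_n)$ an $f$-expanding arc, contradicting the standing hypothesis. The naive candidate $[x, y_n] \subseteq C_i \cup \{x\}$ does not work directly, since $f[[x, y_n]]$ extends into $C_j$ rather than back into $C_i$ past $y_n$. The plan is to follow the full orbit $(f^m(y_n))_{m \geq 0}$: condition $(j')$ at higher exponents severely restricts how the orbit may revisit any single branch (visits to $C_\ell$ after the first cannot move strictly past the first visit, else $(j')$ forces periodicity), and these constraints combined with the initial jump $d(x, f(y_n)) > d(x, y_n)$ and continuity at $x$ are used, via a Ramsey-type argument (plausibly invoking Hindman's theorem, as in the rest of this section of the paper), to locate an arc $A \subseteq X$ and an integer $m \geq 1$ with $A \subsetneq f^m[A]$. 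Producing this combinatorial data --- matching up orbit visits across different indices $n$ to build the expanding arc --- is the technical crux of the argument.
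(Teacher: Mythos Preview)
Your framework is reasonable and the same-branch control via $(j')$ is correct, but the step you flag as the ``main obstacle'' is not merely hard: it is impossible as stated. The data you extract---a sequence $y_n\to x$ in $C_i$ with $f(y_n)\in C_j$ and $d(x,f(y_n))>d(x,y_n)$---is strictly weaker than the full hypothesis ``no positive super-fixed-point of $\Phi$'', and is in fact compatible with the absence of $f$-expanding arcs. Take $X=[-1,1]$, $x=0$, and define $f(t)=-2t$ on $[0,\tfrac12]$, $f(t)=-1$ on $[\tfrac12,1]$, $f(t)=-t/2$ on $[-1,0]$. Then $f^2$ is the identity on $[-1,\tfrac12]$, so every $\fix(f^n)$ is connected and by Lemma~\ref{arc-iff-disconnection} there is no $f$-expanding arc; yet $b_{12}(\varepsilon)=2\varepsilon>\varepsilon$ for every small $\varepsilon$, the vector $(\varepsilon,\varepsilon)$ always fails, and your sequence $(y_n)$ exists exactly as described (with orbit $y_n\mapsto -2y_n\mapsto y_n$, from which no expanding arc can be extracted however long you follow it). A positive super-fixed-point \emph{does} exist here, namely $(\varepsilon,2\varepsilon)$---so your target claim is true, but it cannot be reached through the intermediate data $(y_n)$ alone. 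Passing from ``no super-fixed-point'' to ``$(\varepsilon,\ldots,\varepsilon)$ fails'' throws away exactly the information you would need.

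The paper's argument is direct and avoids any contradiction. Let $n$ be the order of $x$; by continuity choose $\delta>0$ so small that the closed $\delta$-ball $Z$ about $x$ satisfies $f^i[Z]\subseteq S$ for all $0\le i\le n$, and set $Y=Z\cup f[Z]\cup\cdots\cup f^n[Z]$. Invariance comes from a pigeonhole step you never invoke: given $z\in Z$, among the $n{+}1$ points $z,f(z),\ldots,f^n(z)$ two lie in the same branch $I_l$, say $f^i(z),f^j(z)$ with $i<j$; the no-expanding-arc hypothesis (failure of $(j')$ at exponent $j{-}i$) forces $f^j(z)$ no farther from $x$ than $f^i(z)$, hence $f^j(z)\in f^i[xz]$, so $f^j(z)=f^i(z')$ for some $z'\in Z$. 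Then $f^{n+1}(z)=f^{n-(j-i)+1}(z')\in Y$. No Ramsey theory is needed---only pigeonhole on a single orbit segment of length $n{+}1$.
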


\begin{proof}
Let $n$ be the order of $x\in X.$ Since $X$ is a finite tree, for some $\varepsilon>0$ the closed ball centred at $x$ 
is a union of finitely many arcs, say $I_1,\ldots, I_n$, which pairwise intersect at $x$ only. By continuity 
of $f,f^2,\ldots,f^n$, we can pick a $\delta$ with $0<\delta\leq\varepsilon$ such that, if $d(y,z)\leq\delta$, then $d(f^i(y),f^i(z))<\varepsilon$ for every $0 \leq i\leq n$. Let $Z$ be the closed ball of radius $\delta$ centred at $x$, and define 
$$
Y=Z\cup f[Z]\cup\cdots\cup f^n[Z].
$$ 
Due to our choice of $\delta$ and $Z$, we have $x\in\int_X(Y)\subseteq Y\subseteq\bigcup_{i=1}^n I_i$. Since $x$ is a fixed point for $f$, whenever $y\in Y$ we must have $xy\subseteq Y$. Hence $Y$ is a $k$-od for some $k \geq 2$ ($k = n$ and $x$ is the vertex of $Y$ if $n\geq 3$, $k = 2$ if $Y$ is an arc, i.e., $n \in \{1,2\};$ moreover $x$ is an interior point of the arc $Y$ if $n=2$, and it is an endpoint of the arc $Y$ if $n=1$). It remains to show that $Y$ is 
an $f$-invariant subspace, so let $y\in Y$ and let us argue that $f(y)\in Y$. We can write $y=f^m(z)$ for some $z\in Z$ (this includes the case $m=0$, interpreted as $y=z\in Z$); there is nothing to do if $m<n$, so assume that $y=f^n(z)$ for $z\in Z$. Looking at the finite sequence of points $z,f(z),\ldots,f^n(z)$, the pigeonhole principle guarantees the existence of $0\leq i<j\leq n$ and $l\in\{1,\ldots,n\}$ such that $f^i(z),f^j(z)\in I_l$. Let us linearly 
order the arc $I_l$ by copying the order of $[0,1]$ along a homeomorphism mapping $x$ to $0$. If $f^i(z)<f^j(z)$, 
this would mean that 
$$
x f^i(z)\subsetneq x f^j(z)=f^{j-i}(x)f^{j-i}(f^i(z))\subseteq f^{j-i}[x f^i(z)]
$$ 
\noindent is an $f$-expanding arc, a contradiction. Therefore we must have $f^j(z)\leq f^i(z)$, implying that 
$$
f^j(z)\in x f^i(z)=f^i(x) f^i(z)\subseteq f^i[xz]
$$ 
\noindent and so there exists a point $z^\prime \in xz\subseteq Z$ with $f^j(z)=f^i(z^\prime)$. Hence 
$y=f^n(z)=f^{n-j+i}(z^\prime)$ and so $f(y)=f^{n-(j-i)+1}(z^\prime)\in Y$, and we are done.
\end{proof}

\begin{remark}\label{non-equicontinuity-restriction}
Suppose that $X$ is a metric space, $f \colon X\longrightarrow X$ is a map, and $Y\subseteq X$ is an $f$-invariant subspace with $x \in \int_X(Y)$. If $f$ is not equicontinuous at $x$, then the restriction 
$f\upharpoonright Y \colon Y\longrightarrow Y$ also fails to be equicontinuous at $x$: given $\varepsilon>0$, if $\delta>0$ witnesses the equicontinuity of $f\upharpoonright Y$ at $x$, then $\min\{\delta,d(x,X\setminus\int_X(Y))\}$ witnesses the equicontinuity of $f \colon X\longrightarrow X$ at $x$.
\end{remark}

For our next proof we will use a Ramsey-theoretic result known as Hindman's theorem, so we proceed to explain 
the relevant concepts and notations. Given a sequence $(n_k)_{k\in\mathbb N}$ of elements of $\mathbb N$, 
its {\em set of finite sums} is defined as
\begin{eqnarray*}
\fs(n_k)_{k\in\mathbb N} & = & \left\{\sum_{k\in F}n_k\bigg|F\subseteq\mathbb N\text{ is finite and 
nonempty}\right\} \\
 & = & \left\{n_{k_1}+\cdots+n_{k_m}\big|m\in\mathbb N\text{ and }k_1<\cdots<k_m\right\},
\end{eqnarray*}
the set of all numbers that can be obtained by adding a finite amount of terms of the sequence 
$(n_k)_{k\in\mathbb N}$ without repetitions. The result known as Hindman's 
theorem~(\cite[Theorem~3.1]{hindman-thm}) states that for any finite partition of $\mathbb N,$ there exists an infinite (strictly increasing) sequence $(n_k)_{k\in\mathbb N}$ such that the set $\fs(n_k)_{k\in\mathbb N}$ is completely contained in a single cell of the partition.

We will use a slightly stronger form of the aforementioned theorem. Given two (strictly increasing) sequences of natural numbers $(n_k)_{k\in\mathbb N}$ and $(m_k)_{k\in\mathbb N}$, we say that the sequence 
$(m_k)_{k\in\mathbb N}$ is a {\em sum subsystem} of the sequence $(n_k)_{k\in\mathbb N}$ if there are finite subsets $F_1,F_2,\ldots,F_k,\ldots$ of $\mathbb N$ such that, for each  $k \in \mathbb N$, we have 
$\max(F_k)<\min(F_{k+1})$ and $m_k=\sum_{j\in F_k}n_j$. Note that this implies in particular that 
$\fs(m_k)_{k\in\mathbb N}\subseteq\fs(n_k)_{k\in\mathbb N}$. With this terminology, we record Hindman's theorem in the form that will be
used later:

\begin{theorem}[{\cite[Corollary~5.15]{hindman-strauss}}] \label{hindmanthm}
For every infinite \emph{(}strictly increasing\emph{)} sequence $(n_k)_{k\in\mathbb N}$ and for every finite partition $\{A_1,\ldots,A_m\}$ of the set $\fs(n_k)_{k\in\mathbb N}$, there exists an $i\in\{1,\ldots,m\}$ and a sum subsystem $(m_k)_{k\in\mathbb N}$ of the sequence $(n_k)_{k\in\mathbb N}$ such that $\fs(m_k)_{k\in\mathbb N}\subseteq A_i$.
\end{theorem}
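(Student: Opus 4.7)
The plan is to prove Theorem~\ref{hindmanthm} via the Galvin--Glazer ultrafilter argument, using idempotent ultrafilters in $\beta\mathbb{N}$. This fits the paper nicely given its heavy use of ultrafilters elsewhere, and it yields the sum-subsystem strengthening for free once the right closed subsemigroup of $\beta\mathbb{N}$ is isolated.

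First I would identify $\beta\mathbb{N}$ with the space of ultrafilters on $\mathbb{N}$ and extend addition via
$$
p + q = \{A \subseteq \mathbb{N} : \{n \in \mathbb{N} : A - n \in q\} \in p\},
$$
where $A - n = \{k \in \mathbb{N} : n+k \in A\}$. Standard arguments show that $(\beta\mathbb{N}, +)$ is a compact right-topological semigroup. The crucial object is
$$
T = \bigcap_{N \in \mathbb{N}} \overline{\fs(n_k)_{k \geq N}},
$$
with closures taken in $\beta\mathbb{N}$. I would verify that $T$ is a nonempty closed subsemigroup: nonempty as a decreasing intersection of nonempty closed subsets of the compact space $\beta\mathbb{N}$, and closed under $+$ because for any $p, q \in T$, $N \in \mathbb{N}$, and any $m = \sum_{j \in F} n_j$ with $\min F \geq N$, the inclusion $\fs(n_k)_{k > \max F} \subseteq \fs(n_k)_{k \geq N} - m$ holds; since the former set lies in $q$, the set $\{n \in \mathbb{N} : \fs(n_k)_{k \geq N} - n \in q\}$ contains $\fs(n_k)_{k \geq N}$, which is in $p$.

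Next, by the Ellis--Numakura lemma, the compact right-topological semigroup $T$ contains an idempotent $p$, i.e., $p + p = p$. Since $A_1 \cup \cdots \cup A_m = \fs(n_k)_{k \in \mathbb{N}}$ belongs to $p$, some $A_i$ belongs to $p$. I would then invoke the standard fact for idempotent ultrafilters: for any $A \in p$, the set $A^\star = \{n \in A : A - n \in p\}$ lies in $p$, and for every $n \in A^\star$, $A^\star - n \in p$.

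Finally, I would inductively build the sum subsystem. Set $B = A_i^\star \in p$. Choose $m_1 \in B \cap \fs(n_k)_{k \in \mathbb{N}}$ and write $m_1 = \sum_{j \in F_1} n_j$. Having chosen $m_1, \ldots, m_k$ with finite sets $F_1, \ldots, F_k$ satisfying $\max F_\ell < \min F_{\ell+1}$ and such that every nonempty sum of the $m_j$'s lies in $B$, pick
$$
m_{k+1} \in B \cap \bigcap_{\varnothing \neq G \subseteq \{1,\ldots,k\}} \left(B - \sum_{j \in G} m_j\right) \cap \fs(n_\ell)_{\ell > \max F_k},
$$
which belongs to $p$ (as a finite intersection of sets in $p$, using the idempotent property and the fact that $p \in T$), hence is nonempty. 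Writing $m_{k+1} = \sum_{j \in F_{k+1}} n_j$ with $\min F_{k+1} > \max F_k$ gives the next term, and a routine check confirms that every nonempty sum of $m_1, \ldots, m_{k+1}$ remains in $B \subseteq A_i$. Hence $\fs(m_k)_{k \in \mathbb{N}} \subseteq A_i$, as required. The main obstacle is the verification that $T$ is closed under $+$; everything else is routine once the idempotent is in hand.
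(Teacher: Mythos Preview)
The paper does not give a proof of this theorem; it is stated with attribution to \cite[Corollary~5.15]{hindman-strauss} and used as a black box. Your proposal is the standard Galvin--Glazer idempotent-ultrafilter argument (isolate the closed subsemigroup $T=\bigcap_N\overline{\fs(n_k)_{k\geq N}}$, apply Ellis--Numakura, then extract a sum subsystem from $A_i^\star$), which is exactly the proof given in the Hindman--Strauss reference, so there is nothing to compare and your argument is correct.
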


Since $\mathbb N=\fs(2^{k-1})_{k\in\mathbb N}$, the original version of Hindman's theorem follows immediately from Theorem~\ref{hindmanthm} above.

\begin{theorem}\label{non-equi-implies-f-expanding}
Let $X$ be a finite tree and let $f \colon X\longrightarrow X$ be a map. If $f$ is not equicontinuous, then $X$ has an $f$-expanding arc.
\end{theorem}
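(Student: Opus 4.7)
My plan is to argue by contradiction: assume that $f$ is not equicontinuous but $X$ contains no $f$-expanding arc. Note first that, in this situation, there are no $f^N$-expanding arcs for any $N\in\mathbb N$: an arc $A\subsetneq (f^N)^m[A]=f^{Nm}[A]$ would already be $f$-expanding with exponent $Nm$. The strategy is to locate a periodic point $p\in\fix(f^N)$ at which $f^N$ itself fails to be equicontinuous. Once such a $p$ is in hand, Lemma~\ref{tree-to-dendrite} applied with $f$ replaced by $f^N$ (legitimate precisely because no $f^N$-expanding arcs exist) supplies an $f^N$-invariant $k$-od $Y\subseteq X$ with $p\in\int_X(Y)$; Remark~\ref{non-equicontinuity-restriction} then transfers the failure of equicontinuity of $f^N$ at $p$ to the restriction $f^N\upharpoonright Y$; since constant maps are equicontinuous, $f^N[Y]$ cannot be a singleton, so Proposition~\ref{simple-f-expanding} yields an $(f^N\upharpoonright Y)$-expanding arc $A\subseteq Y$. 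Because $(f^N\upharpoonright Y)^m[A]\subseteq f^{Nm}[A]$, the arc $A$ is also an $f$-expanding arc in $X$, contradicting the standing hypothesis and closing the argument.

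To find such a $p$, I would start from a witness $x_0,\varepsilon,(x_k)_{k\in\mathbb N},(n_k)_{k\in\mathbb N}$ of the failure of equicontinuity of $f$ at some $x_0$, and pass to convergent subsequences so that $f^{n_k}(x_0)\to a$ and $f^{n_k}(x_k)\to b$ with $d(a,b)\geq\varepsilon$. Hindman's theorem (Theorem~\ref{hindmanthm}) is the key Ramsey-theoretic input: cover $X$ by finitely many open sets of small diameter and partition $\fs(n_k)_{k\in\mathbb N}$ according to which element of the cover contains $f^m(x_0)$. The theorem returns a sum subsystem $(m_k)_{k\in\mathbb N}$ along which every $f^m(x_0)$ with $m\in\fs(m_k)_{k\in\mathbb N}$ lies in a common small open set. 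Exploiting that $m_j+m_k\in\fs(m_k)_{k\in\mathbb N}$ whenever $j<k$ and that $f^{m_j+m_k}(x_0)=f^{m_j}(f^{m_k}(x_0))$, together with a diagonal refinement over progressively finer covers, I would extract an accumulation point whose orbit revisits any prescribed neighbourhood of itself, that is, a recurrent point of $f$ whose orbit is controlled by the sum subsystem.

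The main obstacle is upgrading this recurrence to honest periodicity while preserving a point at which some iterate still fails to be equicontinuous. To do so I would combine two ingredients. First, Lemma~\ref{lem:f-bad-fn-bad}(2) guarantees that for each $N$ the map $f^N$ fails to be equicontinuous at some $f^i(x_0)$ with $0\leq i<N$; a pigeonhole argument on the residues $n_k\bmod N$ synchronises the Hindman sum subsystem with this index $i$. Second, the absence of $f^N$-expanding arcs forces each $\fix(f^N)$ to be connected, and hence a subdendrite of $X$ (Lemma~\ref{arc-iff-disconnection}), providing a rigid skeleton along which to locate the desired $p$. The combinatorial synchronisation between the Hindman recurrence and the failure-of-equicontinuity witness, carried out inside the shrinking nested intersection $\bigcap_{m=1}^\infty f^m[X]$ (which by Proposition~\ref{lem:referee} is an $f$-invariant subcontinuum on which $f$ acts surjectively), is the technically delicate step, and is the step I expect to require the most careful bookkeeping.
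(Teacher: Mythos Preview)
Your endgame is sound and matches the paper exactly in the eventually-periodic case: once you have $p\in\fix(f^N)$ at which $f^N$ fails to be equicontinuous, the chain Lemma~\ref{tree-to-dendrite} $\to$ Remark~\ref{non-equicontinuity-restriction} $\to$ Proposition~\ref{simple-f-expanding} produces an expanding arc. Identifying Hindman's theorem as the key input is also correct.

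The genuine gap is precisely where you say it is, and the hints you offer do not close it. Pigeonhole on residues modulo $N$ together with connectedness of $\fix(f^N)$ does not upgrade a recurrent point to a periodic one, nor locate a periodic point at which some iterate fails to be equicontinuous; your sketch gives no mechanism for either. In fact the paper does \emph{not} always find such a $p$. It applies Hindman's theorem not to small metric balls but to the finitely many maximal free arcs of $X$, forcing an entire $\fs$-set of iterates $f^n(x_0)$ into a single free arc $I$; then \emph{iterated} applications of Theorem~\ref{hindmanthm} plus pigeonhole yield a sum subsystem along which the order relations $f^{n_K+n}(x_0)$ versus $f^n(x_0)$ inside $I$ are uniform. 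The suprema $y_K=\sup\{f^n(x_0):n\in\fs(n_k)_{k\geq K}\}$ then split the argument: either some $y_{K+1}=y_K$, in which case $y=y_K$ is shown to be genuinely periodic and the endgame runs with a nearby orbit point $f^n(x_0)$ (not $y$ itself) as the locus of non-equicontinuity, requiring $k$-ods around every point of the orbit $y,f(y),\dots,f^{n_K-1}(y)$; or the $y_K$ strictly decrease, and then two fixed points of different iterates are located with a non-periodic orbit point strictly between them, directly disconnecting some $\fix(f^N)$ via Lemma~\ref{arc-iff-disconnection}. Your outline has no analogue of this second branch, and your recurrence-based plan does not explain how the first branch would be reached.
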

\begin{proof}
Take an $x\in X$ such that $f$ is not equicontinuous at $x$. We have two cases.

\noindent {\bf Case 1:} The point $x$ is eventually periodic (i.e., there is $k \in \mathbb N$ such that $f^k(x)$ 
is a periodic point; equivalently, the set $\{f^n(x)\big|n\in\mathbb N\}$ is finite). This means that, replacing $x$ by some $f^k(x)$ if necessary (and using clause (1) of Lemma~\ref{lem:f-bad-fn-bad}), we may assume that $x$ is a 
periodic point, say with period $n$. Now use clause (2) of Lemma~\ref{lem:f-bad-fn-bad} to find an $i<n$ such
 that $f^n$ fails to be equicontinuous at $y=f^i(x)$, and notice that $y$ is a fixed point for $f^n$. If $X$ has an 
 $f^n$-expanding arc, then this is also an $f$-expanding arc and we are done. If, on the contrary, there are no 
 $f^n$-expanding arcs, then we can use Lemma~\ref{tree-to-dendrite} to obtain an $f^n$-invariant subspace $Y\subseteq X$ such that $Y$ is a  $k$-od for some $k\geq 2$, and $y \in \int_X(Y)$. By 
 Remark~\ref{non-equicontinuity-restriction}, the restricted map 
$f^n\upharpoonright Y \colon Y\longrightarrow Y$ fails to be equicontinuous at $y$, and so by 
Proposition~\ref{simple-f-expanding}, $Y$ must contain an $f^n$-expanding arc $I$. Then $I\subseteq X$ is also an 
$f$-expanding arc, and we are done. \vskip .2cm
 
\noindent {\bf Case 2:} The point $x$ is not eventually periodic. Then the set $\{f^n(x)\big|n\in\mathbb N\}$ is infinite. The space $X$ is a finite tree and so it can be written as a union of finitely many maximal free arcs $I_1,\ldots,I_t$
 in $X$ such that any two distinct $I_i,I_j$ have at most one (branching) point in common. Now we define subsets $A_0,A_1,\ldots,A_t$ of $\mathbb N$ as follows: $n\in A_0$ iff $f^n(x)$ is a branching point of $X$ and, for each $i\in\{1,\ldots,t\}$, $n\in A_i$ iff $f^n(x)\in I_i$ and $f^n(x)$ is not a branching point of $X.$ Clearly 
$$
A_i \cap A_j = \varnothing \hspace{.5cm} \mbox{for every } i,j\in \{0,1,\ldots,t\} \mbox{ with } i \ne j.
$$ 
Given $i \in \{0,1,\ldots,t\}$ the set $A_i$ can be empty. Since the set $\{f^n(x)\big|n\in\mathbb N\}$ is infinite, there exist distinct $m_1,m_2,\ldots,m_r \in \{0,1,\ldots t\}$ such that $A_{m_j} \neq\varnothing$ for every  
$j \in \{1,2,\ldots,r\}$ and $A_i = \varnothing$ for each $i \in \{0,1,\ldots,t\}\setminus \{m_1,m_2,\ldots,m_r\}$. Hence 
$\{A_{m_1},A_{m_2},\ldots,A_{m_r}\}$ is a finite partition of $\mathbb N$. Theorem~\ref{hindmanthm} provides us with a $j\in\{1,2,\ldots,r\}$ and an infinite strictly increasing sequence $n_1^1<\cdots<n_k^1<n_{k+1}^1<\cdots$ of natural numbers, such that the set $\fs(n_k^1)_{k\in\mathbb N}\subseteq A_{m_j}$. Since the points $f^n(x)$ as $n\in\mathbb N$ varies are pairwise distinct and $X$ only has finitely many branching points, $A_0$ is finite. Therefore $m_j\neq 0$ and so
\begin{equation*}
\{f^n(x)\big|n\in\fs(n_k^1)_{k\in\mathbb N}\}\subseteq I_{m_j}.
\end{equation*}
Use a homeomorphism of $[0,1]$ onto $I_{m_j}$ to equip $I_{m_j}$ with a linear order $\leq$. We now partition 
the set $\fs(n_k^1)_{k\geq 2}$ according to whether $f^n(x)<f^{n_1^1+n}(x)$ or $f^{n_1^1+n}(x)<f^n(x)$; a further application of Theorem~\ref{hindmanthm} allows us to obtain a sum subsystem $(n_k^2)_{k\geq 2}$ of 
$(n_k^1)_{k\geq 2}$ such that $\fs(n_k^2)_{k\geq 2}$ is contained in one piece of this partition. This means that 
there is an $R_1\in\{>,<\}$ such that, 
$$
f^{n_1^1+n}(x)\ R_1\ f^n(x), \hspace{.5cm} \mbox{for every } n\in\fs(n_k^2)_{k\geq 2}.
$$
Continuing this process by induction, we obtain, for each $K\in\mathbb N$, a sum subsystem $(n_k^{K+1})_{k\geq K+1}$ of $(n_k^K)_{k\geq K+1}$ and an $R_K\in\{>,<\}$ such that
$$
f^{n_K^K+n}(x)\ R_K\ f^n(x), \hspace{.5cm} \mbox{for each } n\in\fs(n_k^{K+1})_{k\geq K+1}.
$$
Now, an application of the pigeonhole principle allows us to obtain an infinite increasing sequence $(K_k)_{k\in\mathbb N}$ such that all the $R_{K_k}$ are equal, say, without loss of generality, to $>$. What this means is that, if we define the sequence $(n_k)_{k\in\mathbb N}$ by $n_k=n_{K_k}^{K_k}$, then for every $K\in\mathbb N$ and each $n\in\fs(n_k)_{k\geq K+1}$ we have $f^n(x)<f^{n_K+n}(x)$.

Now, for each  $K \in \mathbb N$ we define a point $y_K\in I_{m_j}$ by 
$$
y_K=\sup\{f^n(x)\big|n\in\fs(n_k)_{k\geq K}\}.
$$ 
Since $f^n(x)<f^{n_K+n}(x)$ for every $n\in\fs(n_k)_{k\geq K+1}$, we must have $y_{K+1}\leq y_K$ for every 
$K\in\mathbb N$. We may now break the proof into two further subcases (recall that $d$ is the metric on $X$).

\begin{description}
\item[Subcase 2.A] There is a $K\in\mathbb N$ such that $y_{K+1}=y_K$. Let $y=y_K=y_{K+1}$ and 
note that, for every $N\in\mathbb N$, there is an $l_N\in\fs(n_k)_{k\geq K+1}$ with 
$$
f^{l_N}(x)<y \hspace{.5cm} \mbox{and} \hspace{.5cm} d(f^{l_N}(x),y)<\frac{1}{N}.
$$ 
We have $f^{l_N}(x)<f^{n_K+l_N}(x)<y$; in particular, we also have 
$$
d(f^{n_K+l_N}(x),y)<\frac{1}{N}.
$$ 
It follows that 
$$
\lim_{N\to\infty}f^{l_N}(x)=y \hspace{.5cm} \mbox{and} \hspace{.5cm}
\lim_{N\to\infty}f^{n_K}(f^{l_N}(x))=y.
$$
By continuity of the function $f^{n_K}$, we may conclude that $f^{n_K}(y)=y$. Thus, all the points 
$$
y,f(y),\ldots,f^{n_K-1}(y)
$$ 
\noindent are fixed points of the map $f^{n_K}$. If $X$ contains an $f^{n_K}$-expanding arc, then this arc is 
also $f$-expanding and we are done, so assume otherwise. Then we may apply Lemma~\ref{tree-to-dendrite} to each of the points $y,f(y),\ldots,f^{n_K-1}(y)$ to get $f^{n_K}$-invariant subcontinua 
$$
Y_0,Y_1,\ldots,Y_{n_K-1}\subseteq X
$$ 
\noindent such that, for every $i \in \{0,1,\ldots,n_K-1\},$ we have $f^i(y) \in \int_X(Y_i)$ and each $Y_i$ is 
a $k_i$-od for some $k_i \geq 2$. Let $\varepsilon>0$ be such that for every $i<n_K$, the ball 
centered at 
$f^i(y)$ with radius $\varepsilon$ is contained in $Y_i.$ By the continuity of the functions $f,f^2,\ldots,f^{n_K-1}$ 
we get a  $\delta > 0$ such that, if $d(z,y)<\delta$, then $d(f^i(z),f^i(y))<\varepsilon.$ Hence, for each $i<n_K$, if $d(z,y)<\delta$ then $f^i(z) \in \int_X(Y_i)$. Note that $y$ contains points of the form $f^n(x)$ arbitrarily close, and 
all the points of the form $f^n(x)$ are points where the map $f$ is not equicontinuous (by clause (1) of
Lemma~\ref{lem:f-bad-fn-bad}). Hence we can find a $z$ with $d(z,y)<\delta$ such that $f$ is not equicontinuous at $z$; now use clause (2) of Lemma~\ref{lem:f-bad-fn-bad} to get $i<n_K$ such that $f^{n_K}$ 
is not equicontinuous at $f^i(z) \in \int_X(Y_i)$. Since $Y_i$ is $f^{n_K}$-invariant, we may conclude that 
$f^{n_K}\upharpoonright Y_i$ is not an equicontinuous map (see Remark~\ref{non-equicontinuity-restriction}). Since $Y_i$ is a $k_i$-od, by Proposition~\ref{simple-f-expanding}, the subcontinuum $Y_i$ of $X$ must have an $f^{n_K}$-expanding arc, and we are done.\vskip .2cm

\item[Subcase 2.B] $y_{K+1}<y_K$ for every $K\in\mathbb N.$ Then let $y=\inf\{y_K\big|K\in\mathbb N\}$. For each $K\in\mathbb N$ fix an $m_K\in\fs(n_k)_{k\geq K}$ such that $y_{K+1}<f^{m_K}(x)<y_K$. If for some $K\in\mathbb N$, it is not the case that $y<f^{n_K}(y)$, then we must have 
$$
y f^{m_{K+1}}(y)\subsetneq f^{n_K}(y) f^{n_K+m_{K+1}}(x)\subseteq f^{n_K}[y f^{m_{K+1}}(x)]
$$
\noindent  and therefore there is an $f$-expanding arc and we are done, so assume that for all $K\in\mathbb N$ we have $y<f^{n_K}(y)$. The points $f^{m_k}(x)$ for $k>K$ are arbitrarily close to $y$ and they all satisfy $f^{n_K}(f^{m_k}(x))=f^{n_K+m_k}(x)\leq y_K$, so by continuity of $f^{n_K}$ we have $f^{n_K}(y)\leq y_K$.

We define connected subspaces $Y_1,Y_2\subseteq X$ as follows. $Y_1$ is the connected component of 
$X\setminus\{f^{m_2}(x)\}$ that does not contain $y$, and $Y_2$ is the connected component of 
$X\setminus\{f^{n_3}(y)\}$ containing $y$. Since 
$$
y<f^{n_3}(y)\leq y_3<f^{m_2}(x)<f^{n_1+m_2}(x),
$$ 
\noindent and all such points belong to the maximal free arc $I_{m_j}$ in $X$, we can write 
$$
X=Y_1\cup f^{n_3}(y) f^{m_2}(x)\cup Y_2,
$$ 
\noindent and the union is disjoint. Since $f^{n_1+m_2}(x)\in Y_1$, by Proposition~\ref{fixpoint} there is a 
$z_1\in Y_1\cap\fix(f^{n_1})$; now if we let $K$ be sufficiently large that $y_K<f^{n_3}(y)$ then we will have 
$f^{n_K}(y)\in Y_2$ and so again by Proposition~\ref{fixpoint} there exists a $z_2\in Y_2\cap\fix(f^{n_K-n_3})$. 
Letting $N=(n_K-n_3)n_1$, we get that $z_1,z_2\in\fix(f^N)$, and $f^{m_2}(x)\in z_1 z_2\setminus\fix(f^N)$. 
Hence $\fix(f^N)$ is a disconnected set, and so by Lemma~\ref{arc-iff-disconnection}, $X$ must have an 
$f$-expanding arc.
\end{description}
\end{proof}

\section{The Ellis remainder and ultrafilter-limits}\label{sect:ultrafilters}

In this section we introduce the notion of ultrafilter-limits and point out the relation of this concept with that of 
the Ellis remainder, with the objective of establishing the equivalence of items (a), (e), (h) and (i) from Theorem~\ref{mainthm}. 

\begin{definition}\hfill
\begin{enumerate}
\item An {\bf ultrafilter} on $\mathbb N$ is a family $u$ of subsets of $\mathbb N$ such that
\begin{enumerate}
\item $u$ is nonempty and $\varnothing\notin u$;
\item if $A,B\in u,$ then $A\cap B\in u$;
\item if $A\in u$ and $A\subseteq B\subseteq \mathbb N,$ then $B\in u$;
\item whenever $\mathbb N=A\cup B$, then either $A\in u$ or $B\in u$.
\end{enumerate}
\item An ultrafilter $u$ on $\mathbb N$ is {\bf principal} if there exists an $n\in\mathbb N$ such that 
        $u=\{A\subseteq\mathbb N\big|n\in A\}$; otherwise we say that $u$ is {\bf nonprincipal}.
\item We use the symbol $\beta\mathbb N$ to denote the set of all ultrafilters on $\mathbb N$, and we denote 
        with $\mathbb N^*$ the set of all nonprincipal ultrafilters on $\mathbb N$.
\item Given a metric space $(X,d)$, a sequence $(x_n)_{n\in\mathbb N}$ of points on $X$, and an ultrafilter 
         $u\in\beta\mathbb N$, we say that $x$ is the $u$ {\bf ultrafilter-limit} of $(x_n)_{n\in\mathbb N},$ in
         symbols $x=u\text{-lim}_{n\to\infty}x_n,$ if for every $\varepsilon>0$ the set 
         $\{n\in\mathbb N\big|d(x,x_n)<\varepsilon\}\in u$.
\item Given a metric space $X$, a function $f \colon X\longrightarrow X$, and an ultrafilter $u\in\beta\mathbb N$, 
         we define the $u$ {\bf ultrafilter-limit function} $f^u\colon X\longrightarrow X$ (also called the $u$-th iterate 
         of $f$) by $f^u(x)=u\text{-lim}_{n\to\infty}f^n(x)$.
\end{enumerate}
\end{definition}

Given a compact metric space $X,$ a map $f \colon X\longrightarrow X$ and $x \in X$, 
it can be shown that 
$$
\omega(x,f)=\{f^u(x)\big|u\in\mathbb N^*\}.
$$
A few comments about the above definitions are in order. For each $n\in\mathbb N$, it is common to identify the natural number $n$ with the principal ultrafilter $u_n=\{A\subseteq\mathbb N\big|n\in A\}$; this way we can think 
of $\mathbb N$ as a subset of $\beta\mathbb N$, and we have $\mathbb N^*=\beta\mathbb N\setminus\mathbb N$. Furthermore, one can topologize $\beta\mathbb N$ by declaring the sets 
$\bar{A}=\{u\in \beta \mathbb N\big|A\in u\}$ to be open, for each $A\subseteq \mathbb N$; this endows 
$\beta\mathbb N$ with a compact Hausdorff topology containing $\mathbb N$ as a discrete dense subspace (\cite[Lemma~3.17 and Theorems~3.18 and 3.28]{hindman-strauss}). Regarding the concept of a 
$u$-limit, it is worth pointing out that, in a compact metric space $X$, every sequence $(x_n)_{n\in\mathbb N}$ of points will have a unique $u$-limit (for every $u\in\beta\mathbb N$)(\cite[Theorem~3.48]{hindman-strauss}). Moreover, if $u_n$ is the principal ultrafilter $\{A\subseteq\mathbb N\big|n\in A\}$, then 
$u_n\text{-lim}_{m\to\infty}x_m=x_n$; similarly (and as a consequence of the above), for a function 
$f \colon X\longrightarrow X$ we will have that $f^{u_n}=f^n$. Thus, no confusion should arise if we sometimes 
abuse notation and write $n$ instead of $u_n$.

Furthermore, it is possible to equip $\beta\mathbb N$ with a right-topological semigroup operation, denoted by 
$+$. That is, $+$ is an associative binary operation on $\beta\mathbb N$ such that, for each fixed 
$u\in \beta \mathbb N$, the function $v\longmapsto u+v$ is continuous. The operation is given by the formula
\begin{equation*}
u+v=\{A\subseteq\mathbb N\big|\{n\in\mathbb N\big|\{m\in\mathbb N\big|n+m\in A\}\in v\}\in u\}.
\end{equation*}
This operation extends the usual sum on $\mathbb N$, in the sense that, if $n,m\in\mathbb N$ and $u_n,u_m$ are the corresponding principal ultrafilters, then $u_n+u_m=u_{n+m}$, although $+$ is not commutative on all of $\beta\mathbb N$. It is possible to verify that, for any $u,v\in\beta\mathbb N$, we have $f^u\circ f^v=f^{u+v}$ 
(see~\cite[p. 38]{blass-ultra-dynamics}).

As we mentioned in the Introduction, the equation 
$$
E(X,f)=\{f^u\big|u\in\beta\mathbb N\},
$$
\noindent shown in~\cite[Theorem~2.2]{garcia-sanchis} and which holds for every map 
$f \colon X\longrightarrow X$ on a compact metric space $X$, is the main reason why obtaining information about 
the ultrafilter-limit functions $f^u$ has a great deal of importance within the study of the discrete dynamical system $(X,f)$. 
At this moment, we aim to prove that the existence of expanding arcs implies the discontinuity of ultrafilter-limit functions. We begin by introducing a definition that will help to expedite the statement of the subsequent lemmas.

\begin{definition}
Let $X$ be a metric space.
\begin{enumerate}
\item Let $I\subseteq X$ be an arc, and let $(x_n)_{n\in\mathbb N}$ be a sequence of elements of $I$. We say 
         that the sequence is {\bf $I$-monotone} if it is monotone (i.e., either increasing or decreasing) when viewed 
         as a sequence on the unit interval $[0,1]$ via a homeomorphism $:I\longrightarrow[0,1]$. Equivalently, the 
         sequence $(x_n)_{n\in\mathbb N}$ is monotone if $x_{n+1}\in x_n x_{n+2}$ for each $n\in\mathbb N$ 
         (noting that $x_n x_{n+2}\subseteq I$).
\item If $g \colon X\longrightarrow X$ is a map, a sequence $(x_n)_{n\in\mathbb N}$ of elements of 
         some arc $I\subseteq X$ is said to be {\bf $g$-backwards} if it is $I$-monotone and for each $n \in \mathbb N$ 
         we have $g(x_{n+1})=x_n$.
\end{enumerate}
\end{definition}

\begin{remark}
Note that, by compactness of an arc and monotonicity of backward sequences, any $g$-backward sequence on a dendrite is always convergent. Furthermore, the limit of the sequence is a fixed point of $g$.
\end{remark}

\begin{lemma}\label{expandingtoadequate}
Let $X$ be a dendrite and let $f \colon X\longrightarrow X$ be a map, and suppose that there is an $f$-expanding arc $I\subseteq X$. Then the following two conditions hold:
\begin{enumerate}
\item[\emph{(1)}] for some $m\in\mathbb N$ there exists an $f^m$-backward sequence $(y_n)_{n\in\mathbb N}$ 
        in $I$;
\item[\emph{(2)}] the set $\per(f)$ is disconnected.
\end{enumerate}
\end{lemma}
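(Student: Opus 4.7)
For part (1), the plan is to invoke Lemma~\ref{arc-iff-disconnection} in its equivalent form (j$'$): there exist $x, y \in X$ and $m \in \mathbb{N}$ with $f^m(x) = x$, $f^m(y) \ne y$, and $y \in xf^m(y)$. The arc $xy$ then satisfies $f^m[xy] \supseteq f^m(x)f^m(y) = xf^m(y) \supsetneq xy$, so every point of $xy$ has an $f^m$-preimage inside $xy$. Parametrizing $xy$ linearly with $x$ at $0$ and $y$ at $1$, I set $y_0 = y$ and construct the sequence inductively: given strictly decreasing $y_0 > y_1 > \cdots > y_k$ in $(x, y]$ with $f^m(y_j) = y_{j-1}$, the inclusion $f^m[xy_k] \supseteq f^m(x)f^m(y_k) = xy_{k-1} \supseteq xy_k$ produces an $f^m$-preimage $y_{k+1}$ of $y_k$ inside $xy_k$, while the identities $f^m(x) = x \ne y_k$ and $f^m(y_k) = y_{k-1} \ne y_k$ force $y_{k+1} \in (x, y_k)$, securing strict monotonicity. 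The resulting $(y_k)$ is the desired strictly decreasing $f^m$-backward sequence.

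For part (2), I would proceed by contradiction: assume $\per(f)$ is connected and use the sequence from (1). By continuity applied to $f^m(y_{k+1}) = y_k$, the limit $y_\infty = \lim y_k$ satisfies $f^m(y_\infty) = y_\infty$, so $y_\infty \in \per(f)$ (and $x \in \per(f)$ trivially). Since connected subsets of a dendrite are arcwise connected, the arc between any two periodic points lies entirely in $\per(f)$; in particular, whenever $y_j \in \per(f)$ the arc $xy_j$ (both endpoints periodic) lies in $\per(f)$, and since $y_k \in xy_j$ for every $k \ge j$ this forces all such $y_k$ to be periodic. Thus $\{k : y_k \in \per(f)\}$ is either empty or an upper tail of $\mathbb{N}$.

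An orbit-count rules out the case in which every $y_k$ is periodic. If $y_k$ is $f^m$-periodic, its finite forward $f^m$-orbit contains the $k+1$ distinct points $y_k, y_{k-1}, \ldots, y_0$, and since $f^{jm}(y_k) = y_{k-j}$ for $j \le k$ and $f^{jm}(y_k) = f^{(j-k)m}(y_0)$ for $j > k$, the only way for $y_k$ to return to itself is for $y_k$ to lie in the forward $f^m$-orbit of $y_0$; infinitely many distinct $y_k$ therefore cannot all sit inside a finite orbit of $y_0$, so $y_0 = y$ must have infinite $f^m$-orbit and thus cannot be $f$-periodic. But the hypothesis ``every $y_k$ periodic'' includes $y_0$, a contradiction. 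Consequently some $y_{k_0}$ is non-periodic, and the upper-tail property propagates downward to give $y_0, y_1, \ldots, y_{k_0} \notin \per(f)$.

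The remaining step is to turn this into a genuine disconnection of $\per(f)$. The known periodic anchors $x$ and $y_\infty$ both lie on the $x$-side of every non-periodic $y_j$; realizing a disconnection at some non-periodic $y_j$ on an arc between two periodic points therefore requires a periodic point on the $y$-side of $y_{k_0}$. The main obstacle I anticipate lies precisely in producing such a periodic point on the far side, since the backward sequence itself supplies periodic candidates only on the $x$-side; the natural way to close the argument seems to require examining $f$-iterates of $x$ or $y_\infty$ (whose orbits lie in $\per(f)$ and may reach branches on the $y$-side of $y_{k_0}$), or refining the initial choice of $y$ using its position inside $xf^m(y)$ so that periodic anchors can be placed on both sides.
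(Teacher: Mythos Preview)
Your argument for part~(1) is correct and matches the paper's proof essentially line for line: invoke condition~($j'$) of Lemma~\ref{arc-iff-disconnection} to obtain $x,y,m$ with $f^m(x)=x$, $y\in xf^m(y)$, and then build the $f^m$-backward sequence inductively inside the arc $xy$.

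For part~(2), your proposal is incomplete, and you have correctly identified the gap: you need a periodic point on the $y$-side of $y_0$. The paper fills exactly this gap, and does so \emph{before} any orbit analysis, by a single application of Proposition~\ref{fixpoint}. From $y\in xf^m(y)$ and $y\neq f^m(y)$ it follows that $f^m(y)$ lies in the connected component of $X\setminus\{y\}$ that does not contain $x$; Proposition~\ref{fixpoint} (applied to $f^m$ at the point $y$) then produces a point $z\in\fix(f^m)$ in that component, and automatically $y\in xz$. This is precisely the ``refining the initial choice of $y$ using its position inside $xf^m(y)$'' that you gesture at in your last sentence, but it requires no refinement of $y$: the hypothesis $y\in xf^m(y)$ already hands you the periodic anchor $z$ on the far side.

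With $z$ in hand, the paper's argument is also shorter and more direct than your contradiction/upper-tail setup. One simply needs a single non-periodic point on the arc $xz$. If $y\notin\per(f)$, done. If $y\in\per(f)$ with period $k$, pick any $y_n$ outside the finite set $\{y,f(y),\dots,f^{k-1}(y)\}$; then every forward iterate $f^j(y_n)$ lies in $\{y_{n-1},\dots,y_1\}\cup\{y,f(y),\dots,f^{k-1}(y)\}$, a set avoiding $y_n$, so $y_n\notin\per(f)$ and $y_n\in xy\subseteq xz$. Your orbit-count reasoning is essentially this same observation, but the detour through ``$\{k:y_k\in\per(f)\}$ is an upper tail'' and the attempt to locate periodic points via $y_\infty$ are unnecessary once $z$ is available.
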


\begin{proof}
If there is an $f$-expanding arc $I$ in $X$ then, by Lemma~\ref{arc-iff-disconnection}, there exist points $x,y\in X$ and  $n\in\mathbb N$ such that $f^n(x)=x$, $f^n(y)\neq y$, and $y\in x f^n(y)\subseteq f^n[xy]$, so we can find a $y_1\in xy\setminus\{y\}$ such that $f^n(y_1)=y$. Now $y_1\in xy=f^n(x)f^n(y_1)\subseteq f^n[xy_1]$, so there exists a $y_2\in xy_1\setminus\{y_1\}$ such that $f^n(y_2)=y_1$. Continuing by induction, if we already know $y_1,\ldots,y_k$ with $f^n(y_i)=y_{i-1}$ and $y_i\in xy_{i-1}\setminus\{y_{i-1}\}$, then 
$$
y_k\in xy_{k-1}=f^n(x)f^n(y_k)\subseteq f^n[xy_k],
$$ 
and so there exists a $y_{k+1}\in xy_k\setminus\{y_k\}$ such that $f^n(y_{k+1})=y_k$. This way we obtain a sequence $(y_n)_{n\in\mathbb N}$ which is $f^n$-backward. So (1) holds.

To show (2), we use the points $x,y$ and the sequence $(y_n)_{n\in\mathbb N}$ obtained in (1). Since 
$y\in x f^n(y)$, by Proposition~\ref{fixpoint} there is a point $z\in\fix(f^n)$ such that $y\in xz$; then we have 
$x,z\in\per(f)$, so it suffices to show that $xz\setminus\per(f)\neq\varnothing$. If $y\notin\per(f)$ we are done, so assume that $y=\per(f)$, say with period $k$. Then $\{f^n(y)\big|n\in\mathbb N\}=\{y,f(y),\ldots,f^{k-1}(y)\}$; since 
the $y_n$ are pairwise distinct we can choose an $n\in\mathbb N$ such that $y_n\notin\{y,f(y),\ldots,f^{k-1}(y)\}$. Then 
$$
f^m(y_n)\in\{y_{n-1},\ldots,y_1,y,f(y),\ldots,f^{k-1}(y)\} \hspace{.5cm} \mbox{for all } m\in\mathbb N,
$$ 
\noindent thus for each $m\in\mathbb N$ we have $f^m(y_n)\neq y_n$ and so $y_n\notin\per(f)$. Since $y_n\in xz$, 
the proof is finished.
\end{proof}

\begin{corollary}\label{connectednessofper}
For a map $f \colon X\longrightarrow X$ with $X$ a dendrite, the following are equivalent:
\begin{enumerate}
\item[\emph{(e)}] there is no $f$-expanding arc in $X$;
\item[\emph{(g)}]  the set $\per(f)$ is connected.
\end{enumerate}
\end{corollary}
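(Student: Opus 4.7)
The plan is to establish the two implications separately, each of which follows almost immediately from results already in hand.

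For $(\mathrm{g})\Rightarrow(\mathrm{e})$ I would simply take the contrapositive of part (2) of Lemma~\ref{expandingtoadequate}, which already says that whenever $X$ contains an $f$-expanding arc, $\per(f)$ is disconnected. So if $\per(f)$ is connected, no $f$-expanding arc can exist in $X$.

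For $(\mathrm{e})\Rightarrow(\mathrm{g})$ I would invoke Lemma~\ref{arc-iff-disconnection}, according to which the absence of $f$-expanding arcs in $X$ is equivalent to $\fix(f^n)$ being connected for every $n\in\mathbb N$. The next step is to write $\per(f)=\bigcup_{n=1}^\infty \fix(f^n)$ and present the right-hand side as a union of connected sets sharing a common point. Since every self-map of a dendrite has a fixed point (as recalled in the text, citing \cite[Theorem~10.31]{nadler}), we have $\fix(f)\neq\varnothing$; picking any $p\in\fix(f)$ yields $p\in\fix(f^n)$ for every $n\in\mathbb N$, because $f(p)=p$ forces $f^n(p)=p$. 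Consequently all the connected sets $\fix(f^n)$ share the common point $p$, and the standard fact that a union of connected sets with a common point is connected gives immediately that $\per(f)$ is connected.

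I do not anticipate a serious obstacle here, since everything has already been set up in the earlier lemmas of the section; the only mild subtlety is noticing that one needs $\fix(f)\neq\varnothing$ (and hence a common point for all the $\fix(f^n)$) in order to make the connectedness-of-union argument work, and this is guaranteed by the fixed point property of dendrites.
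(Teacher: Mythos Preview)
Your proof is correct, and for $(\mathrm{g})\Rightarrow(\mathrm{e})$ it is identical to the paper's. For $(\mathrm{e})\Rightarrow(\mathrm{g})$, however, you take a somewhat different route. The paper argues by contrapositive: assuming $\per(f)$ is disconnected, it uses the characterization of connectedness in dendrites (a subset $A$ is connected iff $xy\subseteq A$ for all $x,y\in A$) to find periodic points $x,y$ and a non-periodic $z\in xy$; letting $n,m$ be the respective periods, $z\in xy\setminus\fix(f^{nm})$ shows $\fix(f^{nm})$ is disconnected, and Lemma~\ref{arc-iff-disconnection} then produces an $f$-expanding arc. Your argument instead works directly: from Lemma~\ref{arc-iff-disconnection} you get every $\fix(f^n)$ connected, and then you assemble $\per(f)=\bigcup_n\fix(f^n)$ as a union of connected sets sharing the common point $p\in\fix(f)$, whose existence is guaranteed by the fixed point property of dendrites. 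Your approach is arguably a bit cleaner, since it avoids unpacking what disconnectedness means inside a dendrite; the price is the (mild) extra ingredient that $\fix(f)\neq\varnothing$, which the paper's contrapositive argument does not need.
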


\begin{proof}
Suppose that $\per(f)$ is disconnected, and find $x,y\in\per(f)$ such that there exists a $z\in xy\setminus\per(f)$. If $x$ has period $n$ and $y$ has period $m$, then we have $x,y\in\fix(f^{nm})$; as $z$ is not periodic, we have
 $z\in xy\setminus\fix(f^{nm})$. Hence the set $\fix(f^{nm})$ is disconnected and so, by 
 Lemma~\ref{arc-iff-disconnection}, $X$ contains an $f$-expanding arc. Conversely, if $X$ contains an 
 $f$-expanding arc, use Lemma~\ref{expandingtoadequate}.
\end{proof}

Now, in order to use $g$-backward sequences to deduce discontinuity of elements in the Ellis remainder, we will introduce a fairly stronger definition that allows us to work in a slightly more general context. In what follows, it will be convenient that the indexing of our sequences starts at 0 rather than at 1.

\begin{definition}
Let $X$ be a compact metric space, and let $g \colon X\longrightarrow X$ be a map. A sequence 
$(x_n)_{n\in\mathbb N\cup\{0\}}$ of elements of $X$ will be said to be {\bf $g$-divergent} if the following three conditions hold:
\begin{enumerate}
\item $x=\lim_{n\to\infty}x_n$ exists in $X$;
\item for each $n\in\mathbb N$, $g(x_{n+1})=x_n$ (this implies that $g(x)=x$);
\item there exists an open neighbourhood $U\subseteq X$ of $x$ such that $U\cap\{g^n(x_0)\big|n\in\mathbb N\}
         =\varnothing$.
\end{enumerate}
\end{definition}

It is not hard to see that $g$-divergent sequences can only exist if $g$ fails to be equicontinuous. As a matter of 
fact, much more is true, as seen in the following theorem.

\begin{theorem}\label{discontinuity}
Let $X$ be an arbitrary compact metric space and let $g \colon X\longrightarrow X$ be a map. If 
there is an $m\in\mathbb N$ such that $X$ contains a $g^m$-divergent sequence, then for every nonprincipal ultrafilter $u\in\mathbb N^*$, the function $g^u$ is discontinuous.
\end{theorem}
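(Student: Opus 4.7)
The plan is to argue by contradiction. Fix a nonprincipal ultrafilter $u\in\mathbb N^*$ and assume, toward a contradiction, that $g^u$ is continuous at $x=\lim_{n\to\infty}x_n$. I will deduce that $x$ lies in the closure of $A:=\{g^{mn}(x_0)\mid n\geq 1\}$, contradicting condition~(3) of $g^m$-divergence.

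First, I record some preliminaries. Applying continuity of $g^m$ to the relation $g^m(x_{n+1})=x_n$ yields $g^m(x)=x$. Iterating the same relation gives $g^{mk}(x_k)=x_0$ for every $k\geq 0$. Since the residue classes modulo $m$ partition $\mathbb N$ into $m$ pieces, there is a unique $r\in\{0,1,\ldots,m-1\}$ such that $A_r:=\{n\in\mathbb N\mid n\equiv r\pmod m\}\in u$. For each $n\in A_r$, $g^n(x)=g^r(x)$ (since $g^m(x)=x$), whence $g^u(x)=g^r(x)$.

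The key step locates $g^u(x_k)$. Writing each $n\in A_r$ as $n=mq+r$ with $q\geq 0$, the subset $B_k:=\{n\in A_r\mid q\geq k+1\}$ is cofinite in $A_r$, hence $B_k\in u$. For $n\in B_k$, using $g^{mk}(x_k)=x_0$,
\[
g^n(x_k)=g^{(q-k)m+r}\bigl(g^{mk}(x_k)\bigr)=g^r\bigl(g^{(q-k)m}(x_0)\bigr)\in g^r[A].
\]
Consequently $g^u(x_k)\in\overline{g^r[A]}$ for every $k\geq 0$. By the assumed continuity of $g^u$ at $x$, $g^u(x_k)\to g^u(x)=g^r(x)$ as $k\to\infty$, so $g^r(x)\in\overline{g^r[A]}$.

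Now apply the continuous map $g^{m-r}$. Using $g^{m-r}\circ g^r=g^m$, $g^m(x)=x$, and the standard fact that $h[\overline{E}]\subseteq\overline{h[E]}$ for any continuous $h$ and any set $E$,
\[
x=g^{m-r}\bigl(g^r(x)\bigr)\in g^{m-r}\bigl[\overline{g^r[A]}\bigr]\subseteq\overline{g^m[A]}\subseteq\overline{A}.
\]
However, condition~(3) of $g^m$-divergence gives $A\cap U=\varnothing$, whence $\overline{A}\subseteq X\setminus U$, contradicting $x\in U$. The main conceptual obstacle is identifying the correct shift relating $g^u(x_k)$ to the forward orbit of $x_0$: the trick is to use the iterated backward relation $g^{mk}(x_k)=x_0$ to rewrite $g^n(x_k)$ as $g^r\bigl(g^{(q-k)m}(x_0)\bigr)$, after which pulling back by $g^{m-r}$ converts the translated orbit $g^r[A]$ into $g^m[A]\subseteq A$, where condition~(3) finally delivers the contradiction.
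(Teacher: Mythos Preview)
Your proof is correct. The approach differs from the paper's in a meaningful way, so a brief comparison is warranted.

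The paper proceeds by first writing $u=mv+i$ for a suitable Rudin--Keisler image $v\in\mathbb N^*$ and residue $i$, then exhibits discontinuity of $g^u$ at the shifted point $y=g^{m-i}(x)$: setting $y_k=g^{m-i}(x_k)$, one computes $g^u(y)=x\in U$ directly, while each $g^u(y_k)$ is shown to be an accumulation point of the forward $g^m$-orbit of $x_0$, hence lies outside $U$. Thus the paper identifies explicitly a point and a sequence witnessing the failure of continuity, at the cost of invoking the algebra of $\beta\mathbb N$.

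Your argument instead shows discontinuity at $x$ itself. You avoid the Rudin--Keisler machinery entirely: after isolating the residue $r$ with $A_r\in u$, you obtain $g^u(x)=g^r(x)$ and $g^u(x_k)\in\overline{g^r[A]}$, then use the continuity of the finite iterate $g^{m-r}$ to pull the conclusion back to $x\in\overline{A}$, contradicting condition~(3). This is more elementary and self-contained; the price is the extra closure-and-pullback step, which the paper's direct computation of $g^u(y_k)\notin U$ sidesteps. Both routes hinge on the same observation that $g^{mk}(x_k)=x_0$ lets one rewrite $g^n(x_k)$, for $n$ large in the correct residue class, in terms of the forward $g^m$-orbit of $x_0$.
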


\begin{proof}
Let $(x_n)_{n\in\mathbb N\cup\{0\}}$ be the hypothesized $g^m$-divergent sequence, let $x=\lim_{k\to\infty}x_k$, and let $U$ be an open set containing $x$ such that $U\cap\{g^{mn}(x_0)\big|n\in\mathbb N\}=\varnothing$.

Now let $u\in\mathbb N^*$ be an arbitrary nonprincipal ultrafilter. There exists a unique $0 \leq i<m$ such that 
$m\mathbb N+i\in u$, so that $m\mathbb N\in u-i$. This means that it makes sense to consider the Rudin--Keisler image $v$ of the ultrafilter $u-i$ under the mapping $\colon m\mathbb N\longrightarrow\mathbb N$ given by $mk\longmapsto k$. So we have that $mv+i=u$ (where $mv$ denotes the Rudin--Keisler image of the ultrafilter 
$v$ under the mapping $k\longmapsto mk$).

Define a new sequence $(y_n)_{n\in\mathbb N\cup\{0\}}$ by letting $y_n=g^{m-i}(x_n)$, and let $y=g^{m-i}(x)$. 
Since $(x_n)_{n\in\mathbb N\cup\{0\}}$ converges to $x$ and $g^{m-i}$ is continuous, $(y_n)_{\mathbb N\cup\{0\}}$ will converge to $y$. We now proceed to observe that 
\begin{eqnarray*}
g^u(y) & = & g^{mv+i}(g^{m-i}(x))=(g^m)^v(g^i(g^{m-i}(x))) \\
 & = & (g^m)^v(g^m(x))=(g^m)^v(x)=x\in U,
\end{eqnarray*}
and, for each $k\in\mathbb N$, we have
\begin{equation*}
g^u(y_k)=g^{mv+i}(g^{m-i}(x_k))=(g^m)^v(g^m(x_k))=g^{mv}(x_{k-1}).
\end{equation*}
By definition of ultrafilter-limits, $g^{mv}(x_{k-1})$ must be an accumulation point of the set 
$\{g^{mn}(x_{k-1})\big|n\in\mathbb N\}$. However, for $n>k-1$ we have 
$g^{mn}(x_{k-1})=g^{m(n-k+1)}(x_0)\notin U$, so $g^u(y_k)\notin U$ for every $k\in\mathbb N$, and therefore the sequence $(g^u(y_k))_{k\in\mathbb N}$ does not converge to $x=g^u(y)$, showing that the function $g^u$ is discontinuous at $y$, and we are done.
\end{proof}

The previous lemma works for every compact metric space. For certain dendrites, there is a relation between $g$-backwards sequences and $g$-divergent sequences.

\begin{lemma}\label{adequatetoveryadequate}
Let $X$ be a dendrite with only finitely many branching points, and let $g \colon X\longrightarrow X$ be a map. If there is an arc $I\subseteq X$ such that $I$ contains a $g$-backwards sequence, then there exists an $m\in\mathbb N$ such that $X$ has a $g^m$-divergent sequence.
\end{lemma}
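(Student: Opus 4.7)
The plan is to analyse the forward $g$-orbit $\mathcal{O} := \{g^k(y_0) : k \in \mathbb{N}\}$ of $y_0$, where $y := \lim_{n \to \infty} y_n$ is the limit of the given $g$-backward sequence (so $g(y) = y$), and split into two cases.

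In the \emph{easy case}, when $y \notin \mathcal{O}$ and $y$ is not an accumulation point of $\mathcal{O}$, I would take $m = 1$ and set $x_n := y_{n+1}$ for $n \geq 0$. Then $g(x_{n+1}) = x_n$ and $x_n \to y$. The forward $g$-orbit of $x_0 = y_1$ equals $\{y_0\} \cup \mathcal{O}$, which by hypothesis is disjoint from some neighbourhood $U$ of $y$ (shrinking $U$ if needed to exclude the single point $y_0 \neq y$). Hence $(x_n)_{n \geq 0}$ is a $g$-divergent sequence, and we are done.

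The \emph{hard case}, when $y \in \mathcal{O}$ or $y$ is an accumulation point of $\mathcal{O}$, is where the finiteness of branching points plays its essential role. First, the $g$-backward sequence immediately yields a $g$-expanding arc: for $n$ large, $yy_n \subsetneq yy_0 \subseteq g^n[yy_n]$. Next, using the finiteness of branching points, I would pick a neighbourhood $W$ of $y$ containing no branching point of $X$ other than possibly $y$ itself; then $W \setminus \{y\}$ consists of at most countably many free arcs emanating from $y$, and $(y_n)$ eventually lies along one of them. I would then construct the divergent sequence by iteratively taking preimages along the expanding arc (following the template of Lemma~\ref{expandingtoadequate}(1)), producing a $g^m$-backward sequence $(z_n) \to p$ for some fixed point $p$ of $g^m$ near $y$. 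The starting point $z_0$ must be chosen so that its forward $g^m$-orbit avoids a neighbourhood of $p$---for instance, as a preimage of another fixed point of $g$ whose orbit is known to lie far from $p$. The density of such preimages arbitrarily close to $p$ follows from the expanding dynamics, and the bounded local topology at $y$ (by finiteness of branching points) allows this choice to be made consistently.

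The main obstacle is the hard case: verifying condition (3), namely that the forward $g^m$-orbit of $z_0$ avoids a neighbourhood of $p$. This requires a careful interplay between the expanding arc (providing backward iterates via local inverse branches), Proposition~\ref{fixpoint} (locating additional fixed points of $g^m$ to serve as ``safe'' targets for the eventually periodic orbit of $z_0$), and the finite branching structure (which controls the global behaviour of $g$ by ensuring the forward orbit cannot return near $p$ through arbitrarily many distinct channels). The proof would likely proceed by case analysis on the order of $y$ and on how $\mathcal{O}$ approaches $y$, with the finiteness of branching points ensuring only finitely many cases arise.
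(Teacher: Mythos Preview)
Your easy case is correct and parallels the paper's Case~1. The case splits differ slightly: you test whether $y$ lies in the closure of the forward orbit $\mathcal O$ of $y_0$, whereas the paper (after shrinking $I$ to a free arc and ordering it with $y_0<y$) tests whether the first-point projection $r_I(g^m(y_0))$ ever lands at or above $y_0$ for some $m$. The paper's criterion is coarser but has the virtue of setting up the hard case directly.

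Your hard case, however, is not a proof but a wish list. You correctly identify that one must build a $g^m$-backward sequence $(z_n)\to p$ and arrange that the forward $g^m$-orbit of $z_0$ avoids a neighbourhood of $p$; but your proposal for achieving condition~(3)---take $z_0$ to be ``a preimage of another fixed point of $g$ whose orbit is known to lie far from $p$''---begs the question (which other fixed point, why does it exist, why does a suitable preimage lie on the correct arc?), and your final sentence concedes that further unspecified case analysis would be required. The idea you are missing is short and removes all of this. In the paper's Case~2 one has $r_I(g^m(y_m))=y_0\leq y_m$ while $r_I(g^m(y_0))\geq y_0$, so by the intermediate value theorem the self-map $r_I\circ(g^m\upharpoonright I)$ of $I$ has a fixed point $z_0\in y_0y_m$; since $I$ is a free arc and $z_0$ is interior to $I$, necessarily $g^m(z_0)=z_0$. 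One then chooses $z_{n+1}\in y_{(n+1)m}y_{(n+2)m}$ with $g^m(z_{n+1})=z_n$, obtaining a $g^m$-backward sequence with limit $y$. Because $z_0$ is \emph{itself} a fixed point of $g^m$, its entire forward $g^m$-orbit is the singleton $\{z_0\}$, and any neighbourhood of $y$ that excludes $z_0$ witnesses condition~(3) for free. The difficulty you were anticipating---controlling where the forward orbit of $z_0$ goes---simply evaporates once $z_0$ is chosen to be periodic.
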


\begin{proof}
Let $(x_n)_{n\in\mathbb N\cup\{0\}}$ be a $g$-backwards sequence in the arc $I$, and let $x=\lim_{n\to\infty}x_n$. Notice that $x$ is a fixed point of $g$, and therefore $\lim_{k\to\infty}g^k(x)=x$.

Now let us fix some notation. First of all, since $X$ has only finitely many branching points, we may shrink $I$ and drop finitely many terms of the sequence (and shift indices afterwards so that our sequence indexing still starts at 0) $(x_n)_{n\in\mathbb N\cup\{0\}}$ to ensure that $I$ is a free arc in $X$. Now linearly order the arc $I$ via a homeomorphism with $[0,1]$ in such a way that $x_0<x$. Then the monotonicity of the $g$-backwards sequence $(x_n)_{n\in\mathbb N\cup\{0\}}$ means in this case that the sequence is increasing. Now let $r_I \colon X\longrightarrow I$ be the first point function from $X$ onto the subcontinuum $I$ of $X$. We will analyze the $g$-orbit of $x_0.$ There are two cases to consider. \vskip .2cm

\noindent {\bf Case 1:} For every $m\in\mathbb N$, $r_I(g^m(x_0))\leq x_0$. In this case, for each fixed 
$n\in\mathbb N\cup\{0\}$ we have that $g^{m+n}(x_n)=g^m(x_0)$, and so $r_I(g^{m+n}(x_n))\leq x_0<x$ for every $m\in\mathbb N$. Since $X$ is a dendrite and hence uniformly locally arcwise connected, there must be a 
$\delta>0$ such that, whenever $d(x,z)<\delta$ and $x \ne z$, then the arc $xz$ must have diameter smaller than that of the arc 
$x_0 x$. In particular, if $r_I(z)\leq x_0$, then $d(x,z)\geq\delta$. So if we let $U$ be the ball centred at $x$ with radius $\delta$, then for every $n\in\mathbb N$ it is the case that $g^n(x_0)\notin U$, and consequently the sequence $(x_n)_{n\in\mathbb N}$ itself is already $g$-divergent. \vskip .2cm

\noindent {\bf Case 2:}
There exists an $m\in\mathbb N$ such that $x_0\leq r_I(g^m(x_0))$. Fix one such $m$, and notice that the 
function $r_I\circ(g^m\upharpoonright I):I\longrightarrow I$ satisfies 
$$
r_I(g^m(x_m))=r_I(x_0)=x_0\leq x_m \hspace{.5cm} \mbox{and} \hspace{.5cm} 
 x_0\leq r_I(g^m(x_0)).
 $$
Therefore (by a standard result for maps in the unit interval) this map 
must have a fixed point in $x_0 x_m$, that is, there is a $z_0\in x_0 x_m$ with $z_0=r_I(g^m(z_0))$. Since $I$ is 
a free arc in $X$, we have that $r_I(w)$ is one of the endpoints of $I$ whenever $w\notin I$. Since 
$z_0\in x_0 x_m\setminus\{x_0,x_m\}$ (so $z_0$ is an interior point of $I$), from $z_0=r_I(g^m(z_0))$ it follows 
that $z_0=g^m(z_0)$ and so $z_0$ is actually a fixed point of the map $g^m$.

Now $x_0 x_m=g^m(x_m) g^m(x_{2m})\subseteq g^m[x_m x_{2m}]$, so there must exist a $z_1\in x_m x_{2m}$ such that $g^m(z_1)=z_0$. We continue this process by induction: given a 
$$
z_n\in x_{nm} x_{(n+1)m}=g^m(x_{(n+1)m})g^m(x_{(n+2)m})\subseteq g^m[x_{(n+1)m} x_{(n+2)m}],
$$
\noindent we find a $z_{n+1}\in x_{(n+1)m} x_{(n+2)m}$ such that $g^m(z_{n+1})=z_n$. This way we obtain a monotone sequence $(z_n)_{n\in\mathbb N\cup\{0\}}$, with limit $x$, which is $g^m$-backwards and where $z_0\in\fix(g^m)$. Since $z_0\neq x$, any open set $U$ containing $x$ and not containing $z_0$ will satisfy $(g^m)^n(z_0)=z_0\notin U$, for every $n\in\mathbb N$. Therefore the sequence 
$(z_n)_{n\in\mathbb N\cup\{0\}}$ is $g^m$-divergent.
\end{proof}

We are ready to prove the Main Theorem of this paper.

\begin{proof}[Proof of Main Theorem~\ref{mainthm} \emph{(}and of clause \emph{(1)} of Remark~\ref{aftermainthm}\emph{)}]
The equivalence of (a), (b), (c) and (d) is established in Proposition~\ref{cor:firstfour}. The equivalence of (e) and 
(f) is Lemma~\ref{arc-iff-disconnection}, and that of (e) and (g) is Corollary~\ref{connectednessofper}; in both cases this equivalence works for arbitrary dendrites. Finally, (e) implies (a) by Theorem~\ref{non-equi-implies-f-expanding}; (a) implies (h) easily (by the remark in the Introduction right after Definition~\ref{def:ellis}), and it is obvious that (h) implies (i) and that (d) implies (g). We also have that (i) implies (e): by contrapositive, if there exists an $f$-expanding arc in $X$ then there is an $f^m$-backward sequence for some $m$, by Lemma~\ref{expandingtoadequate}; this yields an $n\in\mathbb N$ such that there is an 
$f^{mn}$-divergent sequence by Lemma~\ref{adequatetoveryadequate}, and this in turn implies that there is no $u\in\mathbb N^*$ such that $f^u$ is continuous, by Theorem~\ref{discontinuity}. The last chain of implications establishes the equivalence of (a) with (e), (h) and (i), which finishes the proof.
\end{proof}

\section{Examples and open problems}

This section contains examples showing that the previous results cannot be extended to other kinds of dendrites. Theorem~\ref{mainthm} holds for finite trees, and trees are dendrites satisfying two additional conditions: that they have finitely many branching points, and that each branching point has finite order. We show examples of dendrites where one of these two conditions fails. Afterwards, we finish the paper by making a few observations about functions defined on finite graphs.

\subsection{Dendrites with finitely many branching points}

In this subsection we proceed to exhibit an example of a dendrite, and two maps defined on it, which together show that none of the equivalences between (a) and (b), (c), or any of (e)-(i) from Theorem~\ref{mainthm} are generalizable to dendrites with finitely many branching points (meaning that the hypothesis that all branching points are of finite order is really necessary in Theorem~\ref{mainthm}).

\begin{remark}\label{rem:aiffd}
A few words regarding the equivalence between (a) and (d) are in order. By~\cite[Theorem~4.12]{camargo-rincon-uzcategui} together with~\cite[Lemma~2.6]{four-chinese-authors}, the implication from (a) to (d) still holds if $X$ is merely a dendrite with finitely many branching points. The reverse implication holds for an arbitrary dendrite and a {\em surjective} map by~\cite[Theorem~5.2]{mai}. Surjectivity, however, is necessary: Sun et. 
al.~\cite[Example~2.9]{sun2014} exhibit an example of a dendrite $X$ and a map 
$f:X\longrightarrow X$ such that $f$ fails to be equicontinuous (although 
$f\upharpoonright\bigcap_{n=1}^\infty f^n[X]$ {\em is} equicontinuous) yet $\per(f)=\bigcap_{n=1}^\infty f^n[X]$. 
The dendrite in this example has infinitely many branching points; we do not know of an example of this 
phenomenon with a dendrite that has finitely many branching points (cf. Question~\ref{question:aiffd}).
\end{remark}

\begin{example}\label{finitely-many-branching-points}
{\em A dendrite $X$ with a unique branching point, which has infinite order, and maps 
$f,g \colon X\longrightarrow X$ such that $f$ satisfies all conditions from \emph{(e)} through \emph{(i)} of \emph{Theorem~\ref{mainthm}} but fails to be equicontinuous, while $g$ is equicontinuous but fails to satisfy conditions \emph{(b)} and \emph{(c)} of \emph{Theorem~\ref{mainthm}}.} \vskip .2cm

For other purposes, the dendrite $X$ together with the map $f$, appear 
in~\cite[Example 5.1]{camargo-rincon-uzcategui}. We reproduce their description here for three reasons: for the reader's convenience, to point out a few observations about the map $f$ that are not made 
in~\cite{camargo-rincon-uzcategui}, and in order to be able to also describe the map $g$. We build $X$ by taking infinitely many disjoint arcs indexed by $\mathbb Z$, $\{I_n\big|n\in\mathbb Z\}$, with each $I_n$ of length 
$\frac{1}{2^{|n|}}$, and identifying in a single point $v$ (the {\em vertex}) one end of each $I_n$. The result 
$X = \bigcup_{n \in \mathbb Z}I_n$ is a dendrite with a single infinite-order branching point $v$, as in Figure~\ref{fig:infinite-order}.

\begin{figure}[t]
\centering
\includegraphics[scale=0.6]{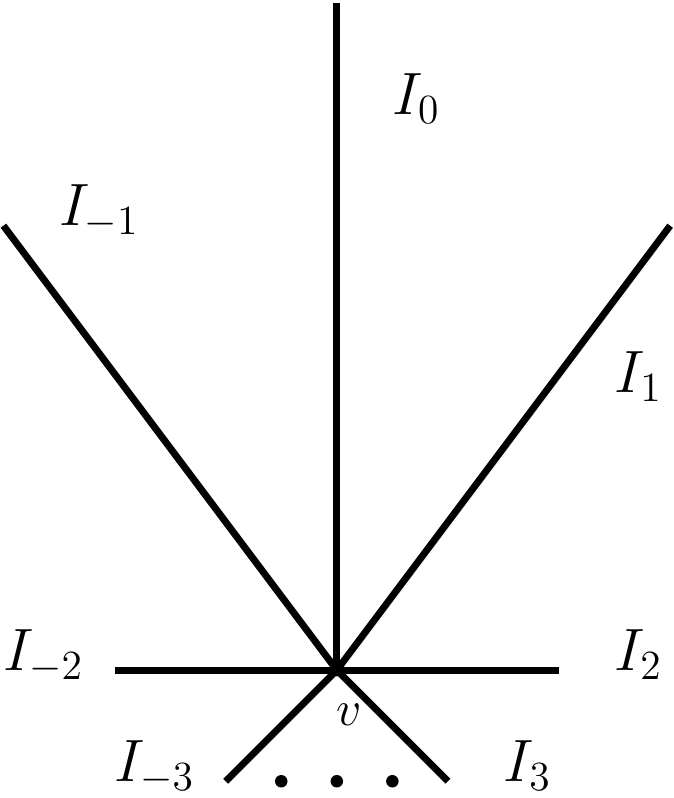}
\caption{The dendrite $X$ that has $v$ as its only branching point of infinite order.}
\label{fig:infinite-order}
\end{figure}

Now, for each $n\in\mathbb Z$, let us consider a map $h_n \colon I_n\longrightarrow I_{n+1}$ defined by fixing $v$, and, for each $e \in(0,\frac{1}{2^{|n|}}]$, if $x$ is the unique element of $I_n \setminus \{v\}$ at distance $e$ from $v$, then $h_n(x)$ is the unique element of $I_{n+1} \setminus \{v\}$ at distance $2^{-\frac{n}{|n|}}e$ (at distance 
$\frac{e}{2}$ in the case $n=0$) from $v$. Note that $h_n$ maps $I_n$ homeomorphically onto $I_{n+1}$.

We let $f \colon X\longrightarrow X$ be defined by $f=\bigcup_{n\in\mathbb Z}h_n$, that is, by $f(v)=v$ and $f(x)=h_n(x)$ whenever $n$ is the unique element in $\mathbb Z$ so that $x\in I_n \setminus \{v\}$. The sequence 
$(x_k)_{k\in\mathbb N}$, where $x_k$ is the endpoint of $I_{-k}$ that is distinct from $v$ (this sequence converges to 
$v$), together with $\varepsilon= \frac{1}{2}$ and the sequence of indices $(n_k)_{k\in\mathbb N}$ given by 
$n_k=k$, witness the failure of the equicontinuity of $f$ at $v$ (since $f^{n_k}(x_k)=x_0$, where $x_0$ is the endpoint of $I_0$ that is distinct from $v$). For each $n\in\mathbb N$, we have 
\begin{equation}\label{p1}
\fix(f^n)=\per(f)=\{v\},
\end{equation}
\noindent which is a connected set. Using (\ref{p1}) it is straightforward to see that property $(j^\prime)$ of
Lemma~\ref{arc-iff-disconnection} is not satisfied. Hence, by the same lemma, $X$ has no $f$-expanding arcs. 
Note that
$$
\omega(v,f) = \{v\} \subsetneq X =  \bigcap_{m=1}^\infty f^m[X].
$$
Moreover, for every $x\in X$ we have $\lim_{n\to\infty}f^n(x)=v$, which implies that, for each nonprincipal ultrafilter $u\in\mathbb N^*$, $f^u \colon X\longrightarrow X$ is the map with constant value $v$, which is continuous.

We now proceed to describe the map $g$. We stipulate that $g\upharpoonright\bigcup_{n\leq 2}I_n$ is the 
identity map. For each positive $m\in\mathbb N\setminus\{2^n\big|n\in\mathbb N\cup\{0\}\}$, we let $g\upharpoonright I_m=h_m$; finally, we let 
$$
g\upharpoonright I_m=h_{m-(2^{n-1}-1)}^{-1}\circ h_{m-(2^{n-1}-2)}^{-1}\circ\cdots\circ h_{m-1}^{-1},
\hspace{.2cm} \mbox{whenever } m=2^n \mbox{ with } n\geq 2.
$$ 
Hence we have $g\upharpoonright I_{2^n}:I_{2^n}\longrightarrow I_{2^{n-1}+1}$. Therefore, for every $n\geq 2$, 
the map $g$ will cyclically permute the finite sequence of arcs $(I_{2^{n-1}+1},I_{2^{n-1}+2},\ldots,I_{2^n})$, in such a way that $f^{2^{n-1}}\upharpoonright\bigcup_{i=2^{n-1}+1}^{2^n}I_i$ is the identity map (and $g$ will fix every point in each of the $I_m$ for $m\in\mathbb Z$ with $m\leq 2$). As a result of this, we will have that 
$\per(g)=X$, and so $g$ will be equicontinuous by~\cite[Theorem~4.14]{camargo-rincon-uzcategui}; at the same 
time, although $f$ is pointwise-periodic, $X$ contains points of arbitrarily high period (if $x\in I_m$ 
for $2^{n-1}+1\leq m\leq 2^n$, $n\in\mathbb N\setminus\{1\}$, then the period of $x$ is equal to $2^{n-1}$) and therefore, for every $n\in\mathbb N$, we have $\fix(g^n)\neq\bigcap_{m=1}^\infty g^m[X]$ and $g^n\upharpoonright\bigcap_{m=1}^\infty g^m[X]$ is not the identity map.
\end{example}

\subsection{Dendrites with branching points of finite order}

We now show that, if we drop the requirement that the dendrite $X$ has finitely many branching points, then none 
of the equivalences of equicontinuity from Theorem~\ref{mainthm} holds. The first few equivalences can be seen to fail by looking at~\cite[Example 5.4]{camargo-rincon-uzcategui}, which is the Gehman dendrite $X$ (this dendrite is described and pictured in~\cite[Example~10.39]{nadler}) with all branching points of finite order (with infinitely many branching points), and a surjective equicontinuous map $f \colon X\longrightarrow X$ such that $\per(f)\neq X$ (consequently, $X=\bigcap_{m=1}^\infty f^m[X]$ and $\per(f)\neq\bigcap_{m=1}^\infty f^m[X]$). Therefore $f$ is equicontinuous but fails to satisfy conditions (b), (c) and (d) of Theorem~\ref{mainthm}.

Now for the remaining equivalences, the following example finishes our analysis.

\begin{example}\label{many-branching-non-equicontinuous}
{\em A dendrite $X$ with infinitely many branching points \emph{(}each of which has finite order\emph{)} and a map $f \colon X\longrightarrow X$ that fails to be equicontinuous, but satisfies \emph{(e)} through 
\emph{(i)} of \emph{Theorem~\ref{mainthm}.}} \vskip .2cm

We build $X$ as a subset of $\mathbb R^2$ as follows. We let $K=[0,1]\times\{0\}$ and, for each 
$n\in\mathbb N\cup\{0\}$, we let $I_n=\left\{\frac{1}{2^n}\right\}\times\left[0,\frac{1}{2^n}\right]$ and 
$J_n=\left\{\frac{1}{2^n}\right\}\times\left[-\frac{1}{2^n},0\right]$. Define
\begin{equation*}
X=K\cup\left(\bigcup_{n=0}^\infty I_n\right)\cup\left(\bigcup_{n=0}^\infty J_n\right).
\end{equation*}
For notational convenience, we write $K=\bigcup_{n=1}^\infty K_n$ where $K_n=
\left[\frac{1}{2^n},\frac{1}{2^{n-1}}\right] \times \{0\}$ for each $n\in\mathbb N$. Now we define the map $f \colon X\longrightarrow X$ as follows. First make $f\upharpoonright K$ the identity map. Now, for 
each $n\in\mathbb N$, $f\upharpoonright I_n$ is given as follows: 
for $\left(\frac{1}{2^n},y\right)\in I_n$ ($0\leq y\leq\frac{1}{2^n}$), we let

\begin{equation*}
f\left(\frac{1}{2^n},y\right)= \left\{
\begin{array}{ll}
\left(\frac{1}{2^n}+2y,0\right), &\mbox{if } 0\leq y\leq\frac{1}{2^{n+1}}; \medskip\\
\left(\frac{1}{2^{n-1}},4\left(y-\frac{1}{2^{n+1}}\right)\right), &\mbox{if } \frac{1}{2^{n+1}}\leq y\leq\frac{1}{2^n},
\end{array}\right.
\end{equation*}

\noindent so that $f$ maps $I_n$ homeomorphically onto $K_n\cup I_{n-1}$. Furthermore, $f\upharpoonright I_0$ 
is defined by letting $f(0,y)=(0,-y)$ so that $f$ maps $I_0$ homeomorphically onto $J_0$. Finally, for each $n\in\mathbb N\cup\{0\}$, we define $f\upharpoonright J_n$ by letting

\begin{equation*}
f\left(\frac{1}{2^n},y\right)= \left\{
\begin{array}{ll}
\left(\frac{1}{2^n}+y,0\right), &\mbox{if } -\frac{1}{2^{n+1}}\leq y\leq 0;  \medskip\\
\left(\frac{1}{2^{n+1}},y+\frac{1}{2^{n+1}}\right), &\mbox{if } -\frac{1}{2^n}\leq y\leq-\frac{1}{2^{n+1}},
\end{array}\right.
\end{equation*}

\noindent whenever $\left(\frac{1}{2^n},y\right)\in J_n$ ($-\frac{1}{2^n}\leq y\leq 0$); so that $f$ maps $J_n$ 
homeomorphically onto $K_{n+1}\cup J_{n+1}$. The dendrite $X$, as well as the map $f:X\longrightarrow X$, 
are depicted in Figure~\ref{infinitely-branching}.

\begin{figure}[t]
\centering
\includegraphics[scale=0.7]{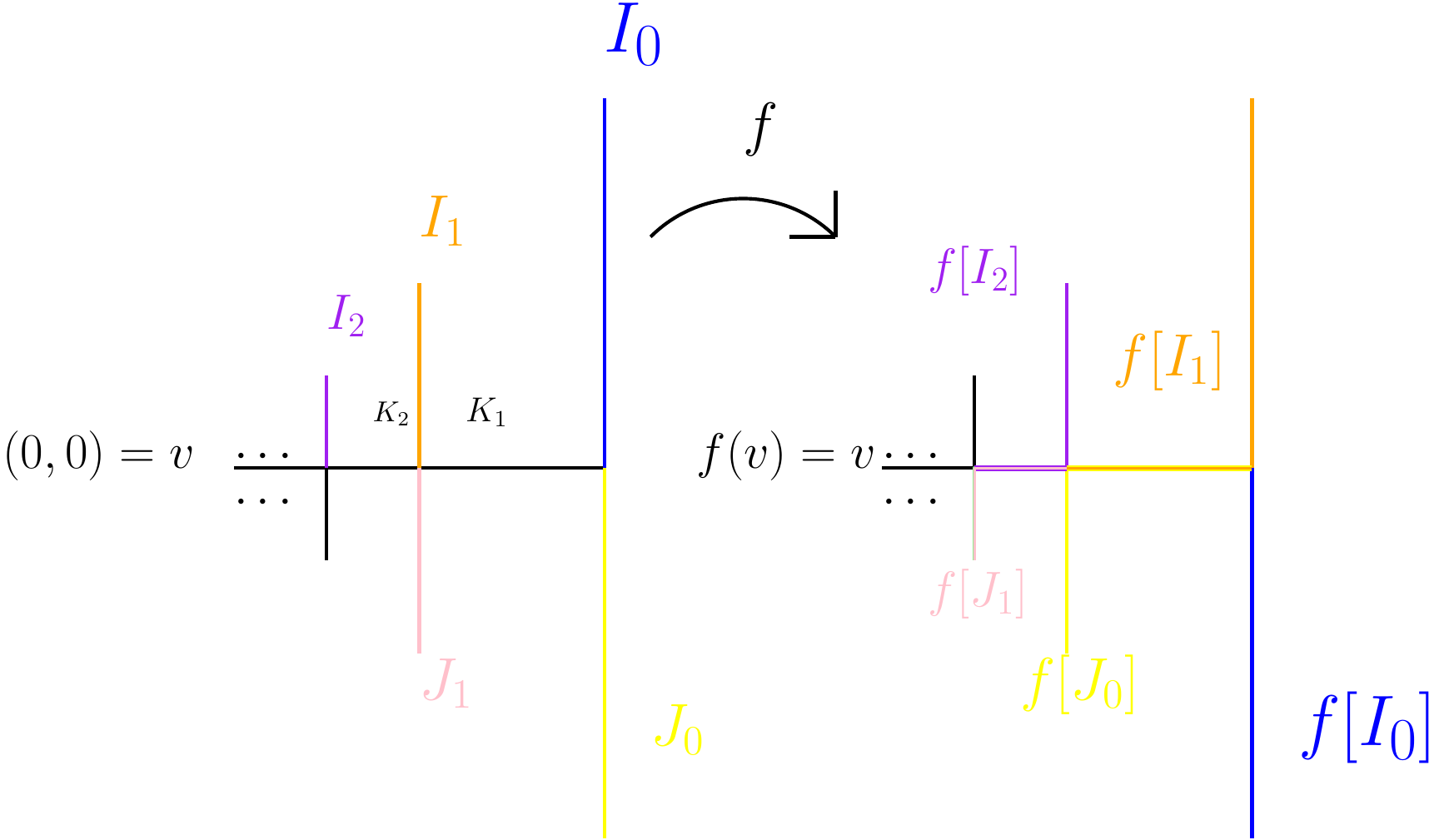}
\caption{The dendrite $X$ that has infinitely many branching points of finite order. The map 
$f \colon X\longrightarrow X$ is not equicontinuous.}
\label{infinitely-branching}
\end{figure}

We will denote by $v$ the point $(0,0)$. Notice that the sequence of endpoints of the $I_n$, 
$\left(\frac{1}{2^n},\frac{1}{2^n}\right)_{n\in\mathbb N}$ (which converges to $v$), along with the increasing 
sequence of indices $(n)_{n\in\mathbb N}$ and $\varepsilon=1$, witness the failure of equicontinuity of $f$ at $v$ (since $f^n\left(\frac{1}{2^n},\frac{1}{2^n}\right)=(1,1)\in I_0$, which is at distance $>1$ from $f^n(v)=v$). So $f$ 
is not equicontinuous.

For every $n\in\mathbb N$ we have 
\begin{equation}\label{p2}
\fix(f^n)=\per(f)=K.
\end{equation}
Thus the sets $\fix(f^n)$, as well as $\per(f)$, are all connected. Using (\ref{p2}) it is straightforward to see that property $(j^\prime)$ of Lemma~\ref{arc-iff-disconnection} is not satisfied. Hence, by the same lemma, $X$ has no $f$-expanding arcs

It remains to show that the function $f^u$ is continuous, whenever $u$ is a nonprincipal ultrafilter. To do this, we define an auxiliary (continuous) function $g \colon X\longrightarrow X$ as follows. First of all, $g\upharpoonright K$ will be the identity map. For every $n\in\mathbb N\cup\{0\}$, we have 
$$
g\left(\frac{1}{2^n},\frac{1}{2^n}\right)=v= g\left(\frac{1}{2^n},-\frac{1}{2^n}\right).
$$ 
Next, if $\left(\frac{1}{2^n},y\right)\in I_n$ is not an endpoint (that is, if $0<y<\frac{1}{2^n}$) then we let 
$m\in\mathbb N\cup\{0\}$ be unique such that 
$$
\frac{1}{2^n}-\frac{1}{2^{n+m}}\leq y<\frac{1}{2^n}-\frac{1}{2^{n+m+1}},
$$ 
\noindent and define
$$
g\left(\frac{1}{2^n},y\right)=\left(\frac{1}{2^{n-m}}+2^{2m+1}\left(y-\frac{1}{2^n}-\frac{1}{2^{n+m}}\right),0\right)
\hspace{.5cm} \mbox{if } m < n,
$$ 
\noindent and 
$$
g\left(\frac{1}{2^n},y\right)=\left(1-2^{2n}\left(y-\left(\frac{1}{2^n}-\frac{1}{2^{2n}}\right)\right),0\right)
  \hspace{.5cm} \mbox{if } n \leq m.
$$ 
Finally, if $\left(\frac{1}{2^n},y\right)\in J_n$ is not an endpoint (i.e., $-\frac{1}{2^n}<y<0$), we let 
$$
g\left(\frac{1}{2^n},y\right)=\left(\frac{1}{2^n}+y,0\right).
$$ 
The function $g$ is continuous and, moreover, for each $x\in X$ we have $\lim_{n\to\infty}f^n(x)=g(x)$ and therefore, for every nonprincipal ultrafilter $u$, it must be the case that $f^u=g$. Thus the function $f^u$ is continuous for 
every nonprincipal ultrafilter $u$.
\end{example}

\subsection{Finite graphs}

The case of finite graphs might be harder to analyze than the case of finite trees. The first difficulty that arises is the fact that the unit circle $\mathbb S^1$ is a finite graph, and there are maps 
$f \colon \mathbb S^1\longrightarrow\mathbb S^1$ (such as, e.g., rotations by an irrational angle), which, though equicontinuous and surjective, lack any periodic 
points. Thus, items (a), (b), (c) and (d) from Theorem~\ref{mainthm} are no longer equivalent if one attempts to replace ``finite tree'' with ``finite graph'' in its statement. For finite graphs with at least one branching point or at 
least one endpoint, however, the equivalence between items (a), (b) and (c) can be established by adapting the argument in the proof of Proposition~\ref{cor:firstfour}. The following example shows that the equivalence between statements (a) and (f) from Theorem~\ref{mainthm} does not hold on finite graphs, even if one demands that the graphs have branching points or endpoints.

\begin{example}\label{finite-graph}
{\em Two finite graphs $X_1,X_2$, and corresponding equicontinuous maps $f_i \colon X_i \longrightarrow X_i$ so that $\fix(f_i)$ is disconnected, for $i\in\{1,2\}$. Here $X_1$ has exactly one branching point and only one endpoint, $X_2$ is a simple closed curve in $X_1$ \emph{(}and thus it has no branching points nor endpoints\emph{)}, and $f_2=f_1\upharpoonright X_2$.} 

The finite graph $X_1$ is given by
\begin{equation*}
X_1=\left\{(x,y)\in\mathbb R^2\big|x \in [1,2]\text{ and }y=0,\text{ or } x\in[-1,1]\text{ and }y=\pm x\right\},
\end{equation*}
and we let $f_1(x,y)=(x,-y)$. Since $f_1^2$ is the identity map, $f_1$ is equicontinuous. However, $\fix(f_1)$ is obviously disconnected. Now 
\begin{equation*}
X_2 = \left\{(x,y)\in\mathbb R^2\big| x\in[-1,1]\text{ and }y=\pm x\right\}
\end{equation*}
and  $f_2=f_1\upharpoonright X_2 \colon X_2 \longrightarrow X_2$ satisfy the requirements.
\end{example}

The observations, along with the example from this subsection suggest that the following might be a worthwhile question (a subset of the following question appears as~\cite[Question~3.10]{ivon-salvador}).

\begin{problem}
Let $(X,f)$ be a discrete dynamical system. Which of the equivalences from \emph{Theorem~\ref{mainthm}} hold 
if we assume that $X$ is an arbitrary finite graph? Which of them hold if we furthermore assume that $X$ has at 
least one branching point or at least one endpoint?
\end{problem}

A subset the next question appears as \cite[Question~3.9]{ivon-salvador}. First recall that
the cone over the harmonic sequence $\{0,1,\frac{1}{2},\frac{1}{3},\ldots\}$ is called the
\emph{harmonic fan}. Attempting to generalize some clauses of Theorem~\ref{mainthm} from finite trees
to non-locally connected continua, we ask the following question.

\begin{problem}
Let $X$ be the harmonic fan and let $f \colon X\longrightarrow X$ be a map. Is it true that
the existence of an $f$-expanding arc in $X$ is equivalent to the fact that $f^u$ is discontinuous for some $u \in 
\mathbb N^*$? Does the existence of an $f$-expanding arc in $X$ implies that there is $m \in \mathbb N$ such that
$X$ contains a $g^m$-divergent sequence?
\end{problem}

We finish with two more questions related to Remark~\ref{rem:aiffd}.

\begin{problem}\label{question:aiffd}
Is there a dendrite $X$ with finitely many branching points and a map $f:X\longrightarrow X$ such that $\per(f)=\bigcap_{n=1}^\infty f^n[X]$ yet $f$ fails to be equicontinuous? An equivalent way of phrasing the same question is: does the implication $(d)\Rightarrow(a)$ of \emph{Theorem~\ref{mainthm}} hold for any dendrite with finitely many branching points?
\end{problem}

A map $f$ as requested in Question~\ref{question:aiffd} would still need to be equicontinuous when restricted to $\bigcap_{n=1}^\infty f^n[X]$ (even though $f$ itself fails to be equicontinuous). Notice that, for $X$ a finite tree and $f:X\longrightarrow X$ a map, equicontinuity of $f$ is equivalent to equicontinuity of the restriction $f\upharpoonright\bigcap_{n=1}^\infty f^n[X]$ by~\cite[Theorem~5.2]{mai}; on the other hand,~\cite[Example 2.9]{sun2014} shows that this equivalence no longer holds if $X$ is an arbitrary dendrite instead. This motivates the following question.

\begin{problem}
Is there a class of dendrites, broader than the class of finite trees, such that if $X$ belongs to the class and $f:X\longrightarrow X$ is a map, then the equicontinuity of $f\upharpoonright\bigcap_{n=1}^\infty f^n[X]$ implies the equicontinuity of $f$?
\end{problem}

\subsection*{Acknowledgements}
The second author was supported by a postdoctoral fellowship from DGAPA-UNAM under the mentoring of the first author. Both authors are grateful to two anonymous referees and one anonymous editor for numerous suggestions that helped to improve the paper.


\normalsize

\end{document}